\pgfplotsset{compat=newest}
\providecommand{\norm}[1]{\lVert#1\rVert}
\providecommand{\abs}[1]{\lvert#1\rvert}
\providecommand{\vect}[1]{\boldsymbol{#1}}
\newcommand{\ud}{\mathrm{d}}
\newcommand{\udd}{\,\ud}
\newcommand{\RR}{{\mathbb R}}
\newcommand{\ZZ}{{\mathbb Z}}
\newcommand*\GL{\mathrm{GL}}
\newcommand*\SO{\mathrm{SO}}
\providecommand{\so}{\mathfrak{so}}
\newcommand*\g{\mathfrak{g}}
\newcommand*\Group{\mathsf{G}}
\newcommand*\one{\mathrm{1}}
\newcommand{\DLA}[1]{DLA${}^{#1}$}
\newcommand{\inv}{^{-1}}
\newcommand*\Lag{\mathcal{L}}
\newcommand*\Torus{\RR/\ZZ}
\newcommand*\thera{\tau}
\newcommand*\matA{\mathsf{A}}
\newcommand*\matAbar{\overline{\matA}}
\newcommand*\flow{\Phi}
\newcommand*\perPhi{\flow(1)}
\newcommand{\vz}{\dot{\zz}}
\newcommand{\vx}{\dot{\xx}}
\newcommand{\xx}{\boldsymbol{x}}
\newcommand{\qq}{\boldsymbol{q}}
\newcommand{\pp}{\boldsymbol{p}}
\newcommand*\vv{v}
\newcommand{\Fx}{\boldsymbol{F}}
\newcommand*\zz{\xi}
\newcommand*\kker{\boldsymbol{k}}
\newcommand*\FF{\boldsymbol{f}}
\newcommand*\mass{}
\newcommand*\mul{\,}
\newcommand*\trans{^{\mathsf{T}}}
\newcommand*\consmat{{A}}
\newcommand*\yy{\boldsymbol{y}}
\newcommand*\hstiff{\kappa}
\newcommand*\kstiff{\boldsymbol{\kker}_{0}}
\newcommand*\hh{h}
\newcommand{\smallV}{{V}}
\newcommand*\blambda{\boldsymbol{\lambda}}
\newcommand*\dq{\dot{\qq}}
\newcommand*\Upstairs{\mathcal{M}}
\newcommand*\Downstairs{\mathcal{N}}
\title{What makes nonholonomic integrators work?}
\author{Klas Modin}
\address{Department of Mathematical Sciences, Chalmers University of Technology and University of Gothenburg, SE-412 96 Gothenburg, Sweden}
\email{klas.modin@chalmers.se}
\author{Olivier Verdier}
\address{Department of Computing, Mathematics and Physics, Western Norway University of Applied Sciences, Bergen, Norway}
\email{olivier.verdier@hvl.no}
\begin{document}

\maketitle

\begin{abstract}
A nonholonomic system is a mechanical system with velocity constraints not originating from position constraints; 
rolling without slipping is the typical example.
A nonholonomic integrator is a numerical method specifically designed for nonholonomic systems.
It has been observed numerically that many nonholonomic integrators exhibit excellent long-time behaviour when applied to various test problems.
The excellent performance is often attributed to some underlying discrete version of the Lagrange--d'Alembert principle.
Instead, in this paper, we give evidence that \emph{reversibility} is behind the observed behaviour.
Indeed, we show that many standard nonholonomic test problems have the structure of being foliated over reversible integrable systems.
As most nonholonomic integrators preserve the foliation and the reversible structure, near conservation of the first integrals is a consequence of reversible KAM theory.
Therefore, to fully evaluate nonholonomic integrators one has to consider also \emph{non}-reversible nonholonomic systems.
To this end we construct perturbed test problems that are integrable but no longer reversible (with respect to the standard reversibility map).
Applying various nonholonomic integrators from the literature to these problems we observe that no method performs well on all problems.
This further indicates that reversibility is the main mechanism behind near conservation of first integrals for nonholonomic integrators.
A list of relevant open problems is given.

\textbf{Keywords:} nonholonomic systems, nonholonomic integrators, reversible KAM, integrable systems, continuously variable transmission, leap-frog, near conservation of integrals

\textbf{MSC2010:} 37J60, 70F25, 37N30, 65D30, 37J35, 70H08, 70H06
\end{abstract}

\maxtocdepth{section}
\tableofcontents

\section{Introduction}

Many models in mechanics and physics are described by dynamical systems with constraints.
If the constraints do not originate from position constraints (so called \emph{holonomic} constraints), the system is called \emph{nonholonomic}.
A typical example is a disc rolling on a surface without slipping.
The governing differential equations are obtained from the \emph{Lagrange--d'Alembert principle} (see \autoref{sec:nonholcoupled}), which is \emph{not} a variational principle (contrary to Hamilton's principle in Lagrangian mechanics).
Therefore, nonholonomic systems do not, in general, preserve a symplectic structure, although total energy is conserved.\footnote{
	An open question is whether energy conservation is the \emph{only} geometric property of nonholonomic systems.
	That is, given a system on a manifold $\mathcal{N}$ with a first integral, is it always possible to view it as the underlying ODE of a nonholonomic system?~\cite{commML}
}\
Motivated by the success of symplectic integrators for Hamiltonian systems, various discrete versions of the Lagrange--d'Alembert principle have nevertheless been suggested, with the objective of deriving ``structure preserving'' time-stepping algorithms for nonholonomic systems~\cite{CoMa2001,Co2002,McPe2006,FeIgMa2008,FeIgMa2009,KoMaSu2010,KoMaDiFe2010}.
Such algorithms are often called \emph{nonholonomic integrators}; they are observed to nearly conserve first integrals when applied to some standard nonholonomic test problems.
The cause of the near conservation is often attributed to the discrete Lagrange--d'Alembert principle.

In this paper we give numerical and theoretical results suggesting that \emph{reversibility} lies behind the observed good behaviour of nonholonomic integrators, regardless of any underlying discrete Lagrange--d'Alembert principle.
Our results in fact reveal that the standard nonholonomic test problems have a bias: they are all reversible integrable.
We therefore construct a family of nonholonomic test problems that are still integrable but not reversible (that is, not reversible with respect to a standard reversibility map).
We apply several nonholonomic integrators from the literature on the new test problems.
None of them exhibit structure preservation for all problems.

The underlying philosophy of nonholonomic integrators is well summarised by \citet[\S\!~1]{CoMa2001} as follows: ``by respecting the geometric structure of nonholonomic systems, one can create integrators capturing the essential features of this kind of systems.''
Because of our limited geometric understanding of nonholonomic dynamics, it is, however, unclear whether nonholonomic integrators at all possess special properties making them superior to other methods (in the same sense as symplectic integrators for Hamiltonian systems possess properties making them superior to non-symplectic methods).
Except for exact conservation of momentum maps corresponding to horizontal symmetries~\cite[\S\!~5]{CoMa2001}, there are no theoretical results pertaining to structure preserving properties of nonholonomic integrators 
(by contrast, the excellent long-time behaviour of symplectic integrators for Hamiltonian systems and reversible integrators for reversible system is fully explained by KAM theory in combination with backward error analysis, see~\citet*{HaLuWa2006} and references therein).
Nonholonomic integrators nevertheless often display ``very good energy behaviour''~\cite[\S\!~1]{FeJiMa2015}.
Such statements are based on experimental evidence---how well the integrators perform on standard test problems.
Thus, a nonholonomic integrator is ``structure preserving'' if it nearly conserves the first integrals of such test problems over long integration times.
With this definition, if we extend the test problem suite to include our unbiased nonholonomic problems, then, as far as we know, there are no structure preserving nonholonomic integrators (all tested integrators fail to be structure preserving for some of the problems). 
Our aspiration for the new set of unbiased test problems is to serve the community as a more complete suite for evaluating structure preservation of nonholonomic integrators.




We now summarize the contributions in the paper:

\begin{itemize}

\item We show that five classical nonholonomic test problems are part of a larger family of \emph{nonholonomically coupled systems} (\autoref{sec:nonholcoupled}, \autoref{sec:nonholexamples}).

\item We show that a subset of nonholonomically coupled systems (that includes the classical test problems) are foliations over reversible integrable systems (\autoref{prop:fibrevinteg}, \autoref{sec:reversibility}, \autoref{sec:fibrations_over_int_sys}).
We thereby obtain a new family of reversible integrable nonholonomic systems that extends existing systems.

\item We use the result in \autoref{prop:fibrevinteg} together with reversible KAM theorem to explain the excellent long-time behaviour of nonholonomic integrators observed experimentally (\autoref{sec:KAM}).

\item We propose new, perturbed test problems for nonholonomic integrators (\autoref{sec:nonholexamples}).
While still integrable, these problems are not reversible and therefore avoid experimental bias.

\item We carry out numerical experiments with five commonly used nonholonomic integrators on both reversible (biased) and non-reversible (unbiased) test problems (\autoref{sec:numerical_examples}).
The behaviour in the numerical simulations is consistent with our predictions from the theory (\autoref{sub:explanation}).
(One of the methods still yields good long-time behaviour on one of the unbiased problems without explanation, but this method fails for other test problems.)

\end{itemize}

The paper is organised as follows.
In \autoref{sec:nonholcoupled} we describe a family of nonholonomic systems; this family contains some of the classical nonholonomic systems in the literature.
In \autoref{sec:numerical_examples} we run numerical experiments on some particular systems in that family, and measure the conservation of some integrals of those systems.
In \autoref{sec:KAM} we show how reversible KAM theory in combination with \autoref{prop:fibrevinteg} explain near conservation of integrals.
In \autoref{sub:explanation} we then verify our theory against the numerical simulations.
In \autoref{sec:reversibility} and \autoref{sec:fibrations_over_int_sys} we prove results leading to \autoref{prop:fibrevinteg}.

\medskip
\noindent\textbf{Acknowledgements.}
This project has received funding from the European Union’s Horizon 2020 research and innovation programme MSC-RISE project \href{https://www.ntnu.edu/imf/chips}{CHiPS}, from the Swedish Foundation for Strategic Research under grant agreement ICA12-0052, from the Swedish Foundation for International Cooperation in Research and Higher Eduction (STINT) grant No PT2014-5823, and from the Swedish Research Council (VR) grant No 2017-05040.
The authors acknowledge the hospitality of the \href{http://ifs.massey.ac.nz/}{Institute for Fundamental Sciences} of Massey University, and of the mathematics departments of Chalmers and NTNU, where some of this research was conducted.

\section{Nonholonomically coupled systems}
\label{sec:nonholcoupled}

The Lagrange--d'Alembert principle states that a motion curve $\qq(t)$, for a system with Lagrangian function $\Lag(\qq,\dq)$ and nonholonomic constraints $A\paren{\qq}\dq = 0$ fulfils
\begin{equation}\label{eq:LA}
	\delta \int_a^{b} \Lag\paren[\big]{\qq(t),\dot\qq(t)} \udd t = 0,
\end{equation}
for all virtual displacements $\delta\qq$ with $A\paren[\big]{\qq(t)}\delta\qq(t) = 0$.
Throughout this paper we assume that the Lagrangian is of the form
\begin{equation}
	\Lag(\qq,\dot\qq) = \frac{1}{2}\dot\qq^\top \dot\qq - V(\qq),
\end{equation}
for a potential function $V$.
The equations of motion are then given by
\begin{equation}\label{eq:eq_of_motion}
	\begin{split}
		\ddot\qq &= - \nabla V(\qq) + A(\qq)^\top \boldsymbol{\lambda} \\
		0 &= A(\qq)\dot\qq
	\end{split}
\end{equation}
where $\blambda \in \RR^{r}$ are the Lagrange multipliers (see~\cite{Co2002,Bl2015} for details).


As we shall see in \autoref{sec:nonholexamples}, the continuously varying transmission, the nonholonomic oscillator and nonholonomic particle, the knife edge, vertical rolling disk,
are all part of a greater family of nonholonomically coupled systems,
where two independent subsystems are coupled through the constraints.

\begin{definition}
\label{def:nonholcoupledsys}
A \emph{nonholonomically coupled system} is a nonholonomic system with Lagrangian of the form
\begin{align}
	\Lag(\underbrace{\xx,\zz}_{\qq},\underbrace{\vx,\vz}_{\dot\qq}) = \frac{1}{2} \vx\trans \mass \vx + \frac{1}{2} \vz^2 - U(\xx)- \smallV(\zz)
	, \qquad \xx\in\RR^{n-1}, \;\zz\in\RR
	,
\end{align}
and constraints of the form
\begin{align}\label{eq:con_coupled}
	\consmat(\zz) \mul \vx = 0
	,
\end{align}
where, for any $\zz$, the matrix $\consmat(\zz)$ has a kernel of dimension one.
The $(\zz,\vz)$ subsystem is called the \emph{driving system}.
\end{definition}

\begin{remark}
  Note that, in the examples of \autoref{sec:nonholexamples}, some components of $\xx$, or $\zz$, may be periodic (see \autoref{tab:summary}).
  In the rest of the paper we will nevertheless assume that $\xx \in \RR^{n-1}$ and $\zz \in \RR$, for the sake of simplicity.
\end{remark}

\begin{remark}
  Note that the matrix $A$ in equation~\eqref{eq:con_coupled} is not the same as the one appearing in~\eqref{eq:eq_of_motion}.
  The difference is that now $A$ depends only on $\zz$, and applies only on $\vx$.
  This slight abuse of notation should not be confusing.
\end{remark}

Notice that the driving system is a self-contained unconstrained Lagrangian system.
As indicated, one may think of it as the ``driver'' for the remaining system.

We write the total energy $H$ as
\begin{align}\label{eq:coupled_tot_energy}
H(\xx,\zz,\vx,\vz) = \frac{1}{2} \vx\trans \mass \vx + U(\xx) + \hh(\zz,\vz)
,
\end{align}
where $\hh(\zz, \vz)\coloneqq \frac{1}{2}\vz^2 + \smallV(\zz)$ is the energy of the driving system.

Given a nonholonomically coupled system, let $\kker(\zz)$ be a kernel vector of $\consmat(\zz)$ such that $\norm{\kker(\zz)} = 1$.
We then define
\begin{align}
	\vv \coloneqq \vx\trans \kker(\zz)
	.
\end{align}
Since $\kker(\zz)$ spans the kernel of $\consmat(\zz)$, it follows from the constraint equation~\eqref{eq:con_coupled} that~$\vx = \vv \kker(\zz)$.
Also note that since \(
\frac{1}{2}\vx\trans \mass \vx = \frac{1}{2}  \vv^2
\),
we have
\begin{equation}
  H(\xx,\zz,\vx,\vz) = \frac{1}{2}\vv^2 + U(\xx) + \hh(\zz,\vz)
  .
\end{equation}

Both the total energy $H$ and the energy $\hh$ of the driving system are first integrals, so we obtain that
\begin{align}
	\frac{\ud}{\ud t}H &= \frac{\ud}{\ud t}\left( \frac{1}{2} \vv^2 + U(\xx) \right) + \underbrace{\frac{\ud}{\ud t} \hh}_{0} \\
	&= \vv \frac{\ud\vv}{\ud t} - \Fx(\xx)\trans \dot\xx = 0
	 ,
\end{align}
where $\Fx(\xx) \coloneqq -\nabla U(\xx)$.
We now use $\dot\xx = \vv \kker(\zz)$, which, if $\vv\neq 0$, gives
\begin{align}
\label{eq:diffeqv}
	\dot\vv = {\kker(\zz)}\trans \Fx(\xx)
	.
\end{align}
Note that, even though this derivation assumes $v=0$, one can check that \eqref{eq:diffeqv} is still valid when $v=0$ by directly computing the Lagrange multiplier from the equation of motion~\eqref{eq:eq_of_motion}.

The equation of motion are thus
\begin{subequations}
\label{eq:motionqv}
\begin{align}
	\dot\xx &= \kker(\zz)\vv \\
	\dot\vv &= \kker(\zz)\trans \Fx(\xx)
\end{align}
where $\zz$ is a solution of the independent Lagrangian system (the driving system in \autoref{def:nonholcoupledsys})
\begin{align}\label{eq:driver}
\ddot\zz +  \smallV'(\zz) = 0 
.
\end{align}
\label{eq:reduced_coupled}
\end{subequations}
Thus, every nonholonomically coupled system can be reduced to a system of ordinary differential equations of the form~\eqref{eq:reduced_coupled}, with first integrals given by the \emph{passenger energy}
\begin{equation}\label{eq:passenger_energy}
	E(\xx,\vv) \coloneqq \frac{1}{2}\vv^2 + U(\xx)
\end{equation}
and the \emph{driver energy}
\begin{equation}\label{small_energy}
	h(\zz,\vz) = \frac{1}{2}\vz^2 + \smallV(\zz).
\end{equation}
Notice that the total energy \eqref{eq:coupled_tot_energy} is the sum of the driver and passenger energies.


\subsection{Quadratic potentials}
\label{sec:quadpot}

\newcommand*\Stiff{K}

We now assume that the potential is quadratic, i.e.,
\begin{align}
U(\xx) = \frac{1}{2} \xx\trans \Stiff  \xx - \FF\trans \xx
,
\end{align}
where $\Stiff$ is a symmetric positive semi-definite matrix and $\FF\in\RR^{n-1}$ is a constant vector.
The corresponding force $\Fx(\xx) = -\frac{\partial U}{\partial \xx}$ is given by
\begin{align}
\Fx(\xx) =  -\Stiff \xx + \FF
.
\end{align}

Using the spectral decomposition, removing eigenvectors corresponding to zero eigenvalues, the matrix $\Stiff$ can be factorised as \[\Stiff = \hstiff \trans \hstiff,\] for a rectangular $m\times (n-1)$ matrix $\hstiff$ of full rank.
We define
\begin{align}
\yy &\coloneqq \hstiff \xx
,
\\
\kstiff(\zz) &\coloneqq  \hstiff \kker(\zz)
,
\end{align}
%
and the projection
\begin{equation}
	\label{eq:projection}
	\pi(\xx,\vv,\zz,\vz) \coloneqq (\hstiff \xx, \vv,\zz,\vz) =  (\yy,\vv,\zz,\vz)
	.
\end{equation}

From \eqref{eq:reduced_coupled} we have $\dot\vv = {\kker(\zz)\trans} \paren{-\hstiff\trans\hstiff \xx + \FF}$, so we get $\dot\vv = -\kstiff(\zz)\trans \yy  + \kker(\zz)\trans \FF$.
The projection $\pi$ therefore intertwines the original system \eqref{eq:motionqv} with the \emph{reduced system}
\begin{gather}
	\label{eq:reduced}
	\begin{aligned}
		\dot\yy &= \kstiff(\zz) \vv \\
		\dot\vv &= -\kstiff(\zz)\trans \yy + \kker(\zz)\trans {\FF}
	\end{aligned}
\end{gather}
where, again, $\zz$ fulfills equation \eqref{eq:driver}.
There is thus a stack of three systems above one another, summarised by the following chain of projections:
\begin{equation}
  (\xx,\vv,\zz,\vz) \longmapsto (\yy,\vv,\zz,\vz) \longmapsto (\zz,\vz).
\end{equation}

The system \eqref{eq:reduced} can be written in matrix notations, using an auxilliary variable $\varepsilon$ with initial condition $1$, as
\begin{align}
\label{eq:yve}
\frac{\ud}{\ud t}
\begin{bmatrix}
	 \yy \\ \vv \\ \varepsilon
\end{bmatrix}
=
\begin{bmatrix}
	0 & \kstiff(\zz) & 0 \\
	-\kstiff(\zz)\trans & 0 & \kker(\zz)\trans {\FF}\\
	0 & 0 & 0
\end{bmatrix}
\mul
\begin{bmatrix}
	 \yy \\ \vv \\ \varepsilon
\end{bmatrix}
\end{align}
We observe that the matrix in \eqref{eq:yve} is an element of $\mathfrak{se}(m+1)$, the Lie algebra of the semidirect product Lie group $\SO(m+1) \ltimes \RR^{m+1}$, where $m$ is the rank of $\hstiff$.
If $\FF$ is zero, the group reduces to $\SO(m+1)$. If $m = 0$, the group reduces to $\RR$.
The Lie algebra structure of equation~\eqref{eq:yve} is central for the reversibility analysis in \autoref{sec:reversibility}.


%

\newcommand*{\Uz}{V}

\subsection{Examples}
\label{sec:nonholexamples}

Here we give several examples of nonholonomic systems of the form presented in \autoref{sec:quadpot}.
The standard form of these problems are used in the literature as test problems for nonholonomic integrators. 
We also suggest new modifications of the standard systems,
constructed so that they fail to be reversible integrable (as detailed in \autoref{sec:reversibility}).

A summary of the problems in this section in terms of the symbols in \autoref{sec:nonholcoupled} is given in \autoref{tab:summary}.

\begin{table}
\centering
\resizebox{\textwidth}{!}{%
  \begin{tabular}{r|cccccc}
    \toprule
      & CVT & Pendulum CVT & Particle & Knife edge & Mobile robot & Rolling disk \\
   & \autoref{sec:cvt} & \autoref{sec:cvt} & \autoref{sec:cvt} & \autoref{sec:knifeedge} & \autoref{sec:verticalrollingdisk} & \autoref{sec:verticalrollingdisk} \\
      \midrule
  $\xx\in$ & $\RR^2$ & $\RR^2$ & $\RR^2$ & $\RR^2$ & $\RR^2\times \RR/\ZZ$ & $\RR^2\times \RR/\ZZ$ \\
  $\zz\in$ & $\RR$ & $\RR/\ZZ$ & $\RR$ & $\RR/\ZZ$ & $\RR/\ZZ$ & $\RR/\ZZ$ \\
  Group & $\SO(3)$ & $\SO(3)$ & $\SO(2)$ & $\RR$ & $1$ & $1$ \\
$\smallV(\zz)$ & $\zz^2/2$ & $\cos\zz - \varepsilon \sin(2\zz)/2$ & $\zz^2/2$ & $0$ & $\sin\zz$ & $0$ \\
$\consmat(\zz)$ & $\begin{bmatrix}1 \\ \zz\end{bmatrix}^\top$  & $\begin{bmatrix}1 \\ \sin\zz\end{bmatrix}^\top$  & $\begin{bmatrix}1 \\ \zz\end{bmatrix}^\top$  & $\begin{bmatrix}-\sin\zz \\ \cos\zz\end{bmatrix}^\top$  & $\begin{bmatrix} 1&0 \\ 0&1 \\ \cos(\zz) & \sin(\zz)\end{bmatrix}^\top$  & $\begin{bmatrix} 1&0 \\ 0&1 \\ \cos(\zz) & \sin(\zz)\end{bmatrix}^\top$ \\
$\hstiff$ & $\begin{bmatrix}1 & 0\\0 &1 \end{bmatrix}$ & $\begin{bmatrix}1 & 0\\0 &1 \end{bmatrix}$ & $\begin{bmatrix}0 &1 \end{bmatrix}$ & $0$ & $0$ & $0$ \\
$\FF$ & 0 & 0 & 0 & (0,-1) & 0 & 0 \\
$F$ & 0 & 0 & 1 & 2 & 2 & 2 \\
$I$ & 3 & 3 & 2 & 2 & 2 & 2 \\
$\theta$ & 2 & 2 & 2 & 1 & 1 & 1 \\
$\rho$ & $(\yy, -\vv)$ & $(\yy, -\vv)$ & $(\yy, -\vv)$ & $\vv$ & $\vv$ & $\vv$ \\
    \bottomrule
  \end{tabular}
}
  \caption{
Summary of the nonholonomically coupled systems presented in \autoref{sec:nonholcoupled}.
$F$ is the dimension of the kernel of $\hstiff$, and hence the dimension of the fibres of the map $\pi$ defined by equation \eqref{eq:projection}.
$I$ is the number of first integrals.
$\theta$ is the number of angle variables.
$\rho$ is the map used to define the reversibility map in \eqref{eq:Rdef}.
}
\label{tab:summary}
\end{table}


\subsubsection{CVT and nonholonomic particle}
\label{sec:cvt}

The \emph{continuously variable transmission} (CVT) problem is a family of coupled nonholonomic system of the form in \autoref{sec:quadpot} with
\begin{equation}
	\FF = 0\\
\end{equation}
and
\begin{equation}
	\hstiff = \begin{bmatrix}
		1 & 0 \\ 0 &1
	\end{bmatrix}
  .
\end{equation}
It is a simple model for a variable transmission gearbox, as illustrated in \autoref{fig:cvt}.
The driving system determines the gear ratio.
We consider different driver systems.

First, the harmonically driven CVT.
In this case, the driver is a harmonic oscillator, in other words, $\smallV(\zz) = \zz^2/2$. 
The nonholonomic constraint is then 
\begin{equation}
	\consmat(\zz) = \begin{bmatrix}
		1 & \zz
	\end{bmatrix}
  .
\end{equation}
Under the name \emph{contact oscillator} this case is considered as a test problem in~\cite{McPe2006}. 
The system is shown to be reversible integrable in~\cite{MoVe2014}; together with reversible KAM theory this gives a theoretical explanation of the excellent numerical results observed in~\cite{McPe2006}.
\begin{figure}
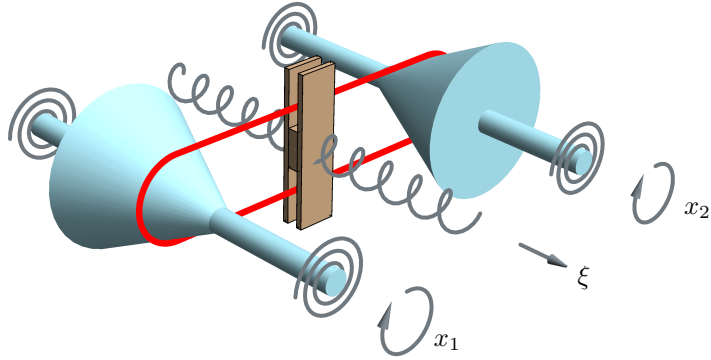

	\centering
	\includestandalone{fig_cvt}
	\caption{Illustration of the principle of the continuous variable transmission (CVT) gearbox. 
	The driving subsystem $(\zz,\vz)$ determines the location of the belt which in turn determines the gear ration between the shafts.
	A nonholonomic system describing the motion is given in \autoref{sec:cvt}.}
\label{fig:cvt}
\end{figure}

The vector spanning the kernel of $\consmat(\zz)$ is
\begin{align}
	\kker(\zz) = \frac{1}{\sqrt{1+\zz^2}}\begin{bmatrix}-\zz \\ 1\end{bmatrix}
  .
\end{align}
Since $\FF=0$ and $\hstiff$ is the identity, 
the evolution matrix~\eqref{eq:yve} is in this case
\begin{align}
\begin{bmatrix}
	 0 &\kker(\zz)\\
	-\kker(\zz)\trans& 0
\end{bmatrix}
,
\end{align}
so the matrix subalgebra is $\so(3)$.

The second type of driver system is the pendulum driven CVT.
Here, the gear ratio (driving system) is governed by a nonlinear pendulum instead of the harmonic oscillator.
The potential for the nonlinear pendulum is now
\begin{equation}\label{eq:pendulum_pot}
  \smallV(\zz) = \cos(\zz) - \varepsilon\sin(2\zz)/2
,
\end{equation} 
where $\varepsilon$ is an arbitrary perturbation parameter,
and the nonholonomic constraint is now given by
  \begin{equation}
	\consmat(\zz) = \begin{bmatrix}
		1 & \sin(\zz)
	\end{bmatrix}
  .
\end{equation}
The vector spanning the kernel of $\consmat(\zz)$ is
\begin{align}
	\kker(\zz) = \frac{1}{\sqrt{1+\sin\paren{\zz}^2}}\begin{bmatrix}-\sin\paren{\zz} \\ 1\end{bmatrix}
  .
\end{align}

We refer to the CVT problem with the driving potential~\eqref{eq:pendulum_pot} as the \emph{pendulum driven} CVT.
As we shall see in \autoref{sec:excvt}, $\varepsilon \neq 0$ corresponds to a non-reversible perturbation, which tends to destroy the good long-time behavior of reversible integrators.

For convenience, the equations of motion are
\begin{equation}\label{eq:pendulum_driven_cvt}
	\begin{split}
		\ddot x_1 &= - x_1 + \lambda \\
		\ddot x_2 &= - x_2 + \sin(\zz)\lambda \\
		\ddot \zz &= - \nabla \smallV(\zz) \\
		0 &= \dot x_1 + \sin(\zz) \dot x_2 .
	\end{split}
\end{equation}

The last type of driver system is the \emph{nonholonomic particle}, considered in~\cite{CoMa2001}.
It is a degenerate case of harmonically driven CVT, where the spring of one of the shafts has zero stiffness, so
\begin{equation}
	\hstiff = \begin{bmatrix}
		 0&1
	\end{bmatrix} 
.
\end{equation}
This gives
\begin{align}
	\kstiff(\zz) = \hstiff \kker(\zz) = \frac{1}{1+\zz^2}
  .
\end{align}
Thus, the evolution matrix~\eqref{eq:yve} is
\begin{align}
\begin{bmatrix}
	 0 &\kstiff(\zz)\\
	-\kstiff(\zz)\trans& 0
\end{bmatrix}
  ,
\end{align}
so the Lie subalgebra is $\g = \so(2)$.



\subsubsection{Knife edge}
\label{sec:knifeedge}

\begin{figure}
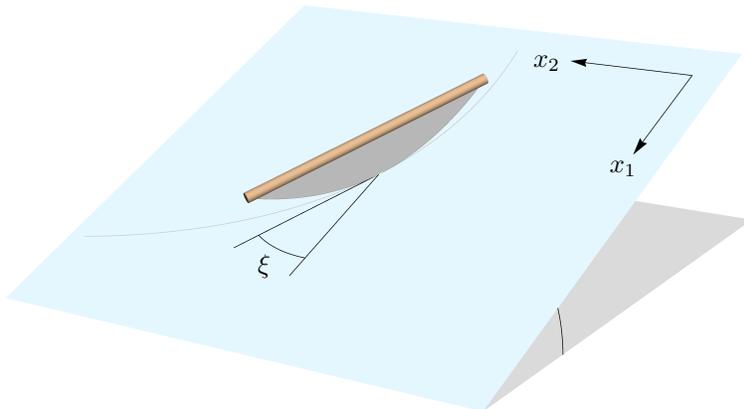

	\centering
	\includestandalone{fig_knife_edge}
	\caption{Illustration of the knife edge system~\eqref{eq:knife_edge}.
	The contact point of the knife edge, or skate, is sliding under gravity on the inclined plane.
	The direction of sliding is determined by the angle~$\zz$; one may think of a ``one-legged skater'', changing direction of his skate according to the driving system.
	}
	\label{fig:knife_edge}
\end{figure}

The \emph{knife edge} (as denoted in~\cite[\S\,1.6]{Bl2015}), or \emph{skate on an inclined plane} (as denoted in~\cite[\S\,9.1]{RaRh2000} and \cite[\S\,1.2.5]{ArKoNe2006}), is given by
\begin{equation}\label{eq:knife_edge}
	\begin{split}
		\ddot x_1 &= 1 -\lambda \sin(\zz) \\
		\ddot x_2 &= \lambda \cos(\zz) \\
		\ddot \zz &= 0 \\
		0 &= -\dot x_1\sin(\zz) + \dot x_2\cos(\zz) .
	\end{split}
\end{equation}
An illustration is given in \autoref{fig:knife_edge}.
In terms of the data in \autoref{sec:nonholcoupled}, the system is defined by
\begin{equation}
\begin{aligned}
	\FF &= \begin{bmatrix}1 \\ 0\end{bmatrix}\\
	\smallV(\zz) &= 0\\
	\consmat(\zz) &= \begin{bmatrix}
		-\sin(\zz) & \cos(\zz)
	\end{bmatrix}\\
\end{aligned}
\end{equation}
and $\hstiff=0$ (it is a $0\times 2$ matrix), so $\yy= [\,]$ (the empty vector).
The kernel vector $\kker(\zz)$ is
\begin{align}
  \label{eq:kinfeedgekernel}
	\kker(\zz) = \begin{bmatrix}\cos(\zz) \\\sin(\zz)\end{bmatrix}
  ,
\end{align}
and the evolution equation of the reduced system~\eqref{eq:reduced} is simply
\begin{equation}
  \dot\vv = \kker(\zz)\trans \FF = \cos(\zz)
  .
\end{equation}
Since $\yy =[\,]$, the matrix in \eqref{eq:yve} is given by
\begin{align}
\begin{bmatrix}
	0&\cos(\zz)\\
	0&0
\end{bmatrix}
     ,
\end{align}
so the underlying group is~$\RR$.

Consider now a slightly perturbed version of the knife edge, where 
\begin{equation}
	\consmat(\zz) = \begin{bmatrix}
		-\sin(\zz) & \cos(\zz) - \varepsilon 
	\end{bmatrix}.
\end{equation}
When $\varepsilon = 0$ this is exactly the knife edge.
As we shall see in \autoref{sec:exknife}, $\varepsilon \neq 0$ implies non-integrability.

\subsubsection{Vertical rolling disk and mobile robot}
\label{sec:verticalrollingdisk}

\begin{figure}
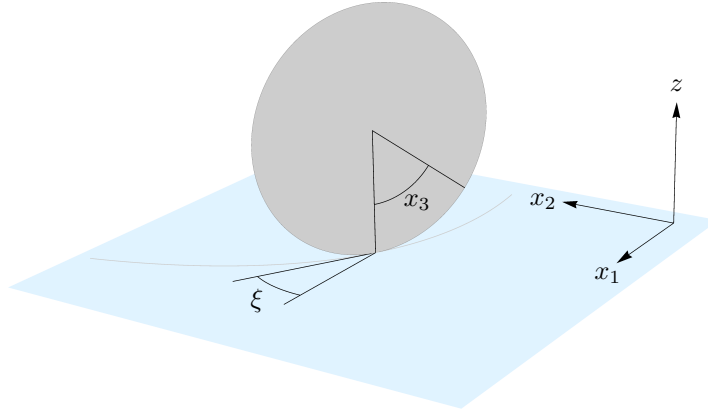

	\centering
	\includestandalone{fig_vert_disk}
	\caption{Illustration of the vertical disk, or rolling penny, given by~\eqref{eq:vert_disk}.
	The rotation of the penny is described by~$x_3$ (measured from the $z$-axis) and the position of the contact point by $(x_1,x_2)$.
	The directional angle is described by~$\zz$.
	Because of conservation of angular momentum, we have $\ddot\zz = 0$.
	}
	\label{fig:vert_disk}
\end{figure}

The \emph{vertical rolling disk} is a standard example of a nonholonomic system \cite[\S\,1.4]{Bl2015}.
It is given by
\begin{align}\label{eq:vert_disk}
	\ddot x_1 &= \lambda_1 \\
	\ddot x_2 &= \lambda_2 \\
	\ddot x_3 &= \lambda_1 \cos(\zz) + \lambda_2 \sin(\zz) \\
	\ddot\zz &= 0 \\
	0 &= \dot x_1 + \cos(\zz)\dot x_3 \\
	0 &= \dot x_2 + \sin(\zz)\dot x_3.
\end{align}
An illustration is given in \autoref{fig:vert_disk}.
In terms of \autoref{sec:nonholcoupled}, the data are
\begin{equation}
	\begin{aligned}
		\hstiff &= 0\\
		\FF &= 0\\
	\consmat(\zz) &= \begin{bmatrix}
		1& 0 &\cos(\zz)\\
		0 &1&\sin(\zz)
	\end{bmatrix}
	\\
	\smallV(\zz) &= 0
	\end{aligned}
\end{equation}
so the kernel $\kker(\zz)$ is given by
\begin{align}
	\kker(\zz) = \frac{1}{\sqrt{2}}\begin{bmatrix}-\cos(\zz)\\-\sin(\zz)\\1\end{bmatrix}
  .
\end{align}
Since $\yy$ is the empty vector and $\FF=0$, the reduced equation~\eqref{eq:reduced} is simply $\dot\vv=0$,
so the underlying group is the trivial group $\one$.

A modification of the vertical rolling disk is the \emph{mobile robot} \cite{CoMa2001}, which corresponds to
\begin{align}
	\smallV(\zz) = \sin(\zz).
\end{align}
Thus, the driving system for the mobile robot is
\begin{equation}
	\ddot\zz = -\cos(\zz).
\end{equation}
Everything else is identical to the vertical rolling disk.

\section{Numerical experiments}\label{sec:numerical_examples}

In this section we give examples and counter-examples of good long-time behaviour for nonholonomic integrators applied to the test problems of \autoref{sec:nonholexamples}.
The counter-examples stem for the perturbed versions of the knife edge and the CVT.
The two perturbed problems correspond to two different mechanisms destroying the good long-time behaviour of nonholonomic integrators:
(i) by removal of the integrable structure (perturbed knife edge), and (ii) by removal of the reversible structure (perturbed CVT).

As representatives for nonholonomic integrators we use five different methods:
\begin{enumerate}
	\item DLA${}^\alpha$ suggested by \citet{CoMa2001}, with $\alpha = 1/2$.
	Since $\alpha = 1/2$, the resulting integrator is reversible.
	The method is given by equation~\eqref{eq:dla_half} in appendix \autoref{sub:dla}.
	\item DLA${}^\alpha$ with $\alpha = 0.4$, making it a non-reversible integrator.
	The method is given by equation~\eqref{eq:dla_alpha} in appendix \autoref{sub:dla}.
	\item DLA${}^{0,1}$ suggested by \citet{McPe2006}.
	The method is given by equation~\eqref{eq:dla_01} in appendix \autoref{sub:dla}.
	\item The leap-frog method (LF) for nonholonomic systems, suggested by \cite{McPe2006} and later revisited by \citet*{FeIgMa2008}.
	The method is given by equation~\eqref{eq:leap_frog} in appendix \autoref{sub:leapfrog}.
	\item The discrete derivative method (DD), initially suggested for Hamiltonian systems by \citet{Go1996} and later adopted to nonholonomic systems by Betsch~\cite{Be2006}. 
    The method is given in \autoref{sec:epinteg}.
\end{enumerate}

\subsection{Knife edge}
\label{sec:exknife}
The initial data is
\begin{equation}
	x_1(0) = 0, \quad
	x_2(0) = 0, \quad
	\zz(0) = \pi/2, \quad
	\dot x_1(0) = 0, \quad
	\dot x_2(0) = 0, \quad
	\vz(0) = 1.
\end{equation}
This corresponds to an initially horizontal skate which is rotated by the driving system thereby picking up speed in the direction of the skate due to gravity.
The unperturbed ($\varepsilon=0$) and perturbed ($\varepsilon=0.1$) systems are integrated using 5 methods:
\begin{center}
DLA${}^{0.5}$, DLA${}^{0.4}$, DLA${}^{0,1}$, DD, and LF.	
\end{center}
The stepsize for all methods is $\Delta t=\pi/10$.
The integration interval is $[0,100]$.

Notice that the dynamics of the driver system $(\zz,\vz)$ is trivial for the knife edge (simply $\ddot \zz = 0$).
Thus, all the integrators provide the exact solution of the driver system (by integrator consistency).
Consequently, all the integrators exactly preserve the energy $h$ of the driving system.

The evolution of the energy error $\abs{H(t)-H(0)}$ for all 5 methods is given in \autoref{fig:energy_knife_edge}.
We make the following observations.
For the unperturbed system, all integrators except DLA${}^{0.4}$ exactly or nearly preserves the energy integral $H$.
For the perturbed system, all integrators except DD give a drift in the total energy $H$.


\begin{figure}
  \centering
	\includegraphics[width=0.99\textwidth]{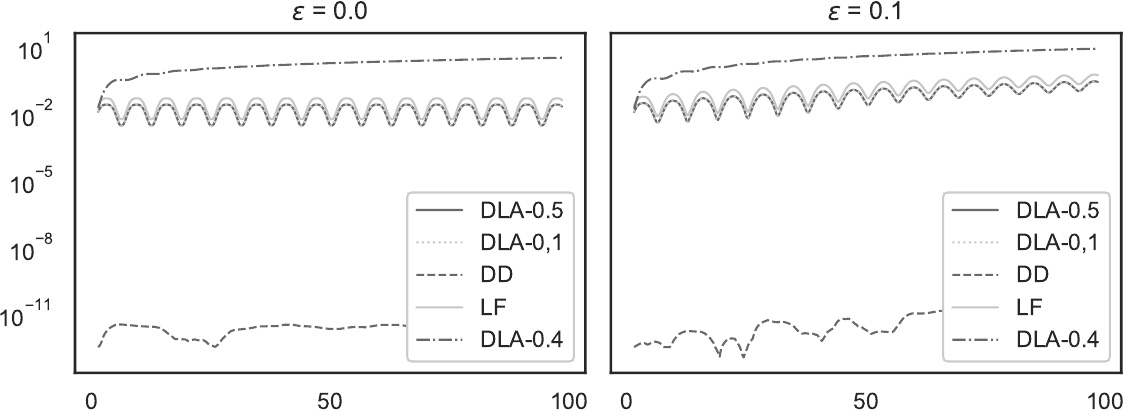} 
	\caption{Evolution of the error $\abs{H-H(0)}$ of the energy integral for 5 methods applied to the knife edge. 
	The data are convoluted by a running mean with a window of about 3 time units.
	For all methods except DLA${}^{0.4}$, the error for the unperturbed system ($\varepsilon=0$) is bounded in time.
	For the perturned system ($\varepsilon=0.1$) all methods except DD show energy drift.
	}
	\label{fig:energy_knife_edge}
\end{figure}


\subsection{Continuously variable transmission (CVT)}
\label{sec:excvt}

The system here is the CVT with potential for the driver system given by \eqref{eq:pendulum_pot}.
We consider two different sets of initial data.
First,
\begin{equation}
	x_1(0) = 1, \quad
	x_2(0) = 1, \quad
	\zz(0) = 0, \quad
	\dot x_1(0) = 0, \quad
	\dot x_2(0) = 0, \quad
	\vz(0) = 1.8973666
\end{equation}
corresponding to total energy $H_0 = 2.8$.
Second,
\begin{equation}
	x_1(0) = 1, \quad
	x_2(0) = 1, \quad
	\zz(0) = 0, \quad
	\dot x_1(0) = 0, \quad
	\dot x_2(0) = 0, \quad
	\vz(0) = 2.82842712
\end{equation}
corresponding to total energy $H_0 = 5.0$.
The two different sets of initial data yield two different types of behaviour for the driver system.
When the energy level is low ($H_0 = 2.8$) the phase diagram of the driver system corresponds to a nonlinear pendulum going back and forth: we call this the \emph{oscillating driver}.
When the energy level is high ($H_0 = 5.0$) the phase diagram of the driver system corresponds to a nonlinear pendulum with enough kinetic energy so that it does not turn back, but keep on rotating in the same direction: we call this the \emph{rotating driver}.
The setup is illustrated in \autoref{fig:phase_small_cvt}.

\begin{figure}
	\centering
	\includegraphics[width=0.99\textwidth]{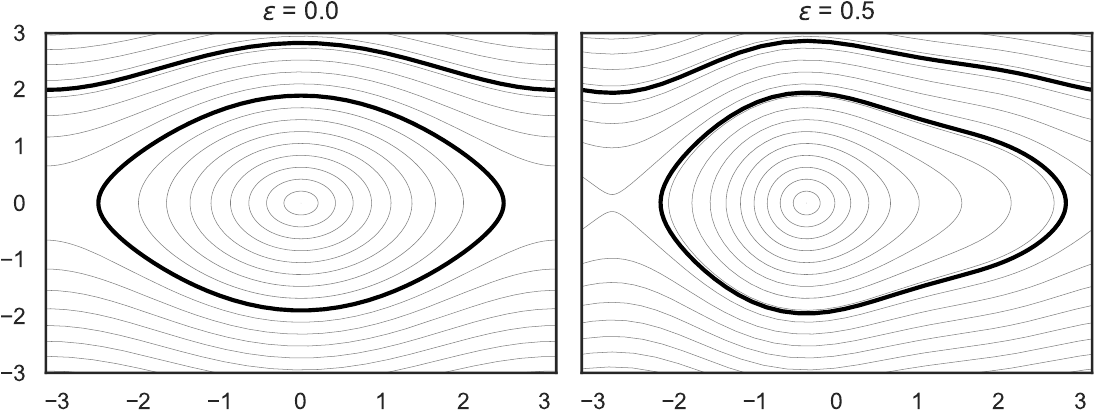}
	\caption{Phase diagrams for the $(\zz,\vz)$ subsystem of the CVT problem, with $\varepsilon=0$ (left) and $\varepsilon=0.5$ (right). 
	The circular (oscillating driver) and upper (rotating driver) paths correspond, respectively, to the low ($H_0 = 2.8$) and high ($H_0=5.0$) energy levels. 
	Both diagrams are symmetric under the standard reversibility map $\vz \mapsto -\vz$.
	Notice that the left diagram also is symmetric under the `non-physical' reversibility map $\zz\mapsto -\zz$, whereas the right diagram does not have this symmetry (due to the perturbation).
	}
	\label{fig:phase_small_cvt}
\end{figure}

The unperturbed ($\varepsilon=0$) and perturbed ($\varepsilon=0.1$) CVT systems, for both choices of initial data, are integrated using 5 methods:
\begin{center}
DLA${}^{0.5}$, DLA${}^{0.4}$, DLA${}^{0,1}$, LF, and DD.
\end{center}
The stepsize for all methods is $\Delta t=0.1$.
The integration interval is $[0,3000]$.

The evolution of the passenger energy error $\abs{E(t)-E(0)}$ for all 5 methods is given in \autoref{fig:energy_cvt}.
The evolution of the driver energy error $\abs{h(t)-h(0)}$ for all 5 methods is given in \autoref{fig:small_energy_cvt}.
Since the CVT system is integrable (as detailed in \autoref{sec:reversibility} and \autoref{sec:fibrations_over_int_sys}), there is an additional integral that is not available explicitly (see \autoref{prop:integsys}).
Although an explicit formula is not available, we can study this integral at the Poincar\'e section given by sampling every period of the driver system.
It can be interpreted as the latitude along a certain direction (depending on the initial data) of the vector $(x_1,x_2,v)$ with $v$ given by \eqref{eq:diffeqv}; we therefore call it the \emph{latitude integral}.
The evolution of the latitude integral error (at the Poincar\'e section) for all 5 methods is given in \autoref{fig:latitude_cvt}.

\begin{remark}
	Since the first preprint of this paper, the asymmetrically perturbed CVT problem has been used as a test problem for energy-momentum type integrators \cite{CeFaHoMa2019}.
	In that paper, however, the authors evaluate the performance solely based on conservation of energy. 
	This misses the point; to say if an integrators performs well or not on this problem one has to study all the integrals, in particular the latitude integral (for which there is no simple formula).
\end{remark}


\begin{figure}
  \centering
	\includegraphics[width=0.99\textwidth]{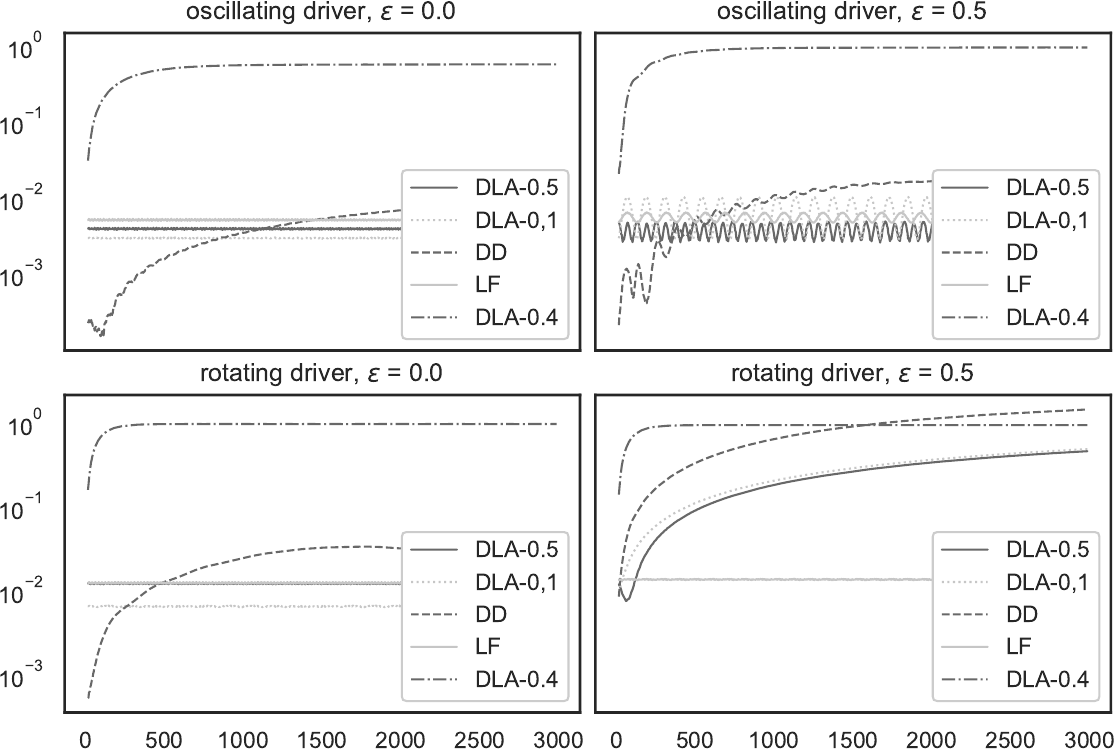}
	\caption{
	Error in the passenger energy \eqref{eq:passenger_energy} for the 5 methods applied to the pendulum driven CVT problem~\eqref{eq:pendulum_driven_cvt} for both the oscillating and rotating driver, and two different values of~$\varepsilon$.
	The data are convoluted by a running mean with a window of 30 time units.
	In the bottom right diagram, a systematic drift of the energy occurs for all methods but LF.
	}
	\label{fig:energy_cvt}
\end{figure}

\begin{figure}
  \centering 
	\includegraphics[width=0.99\textwidth]{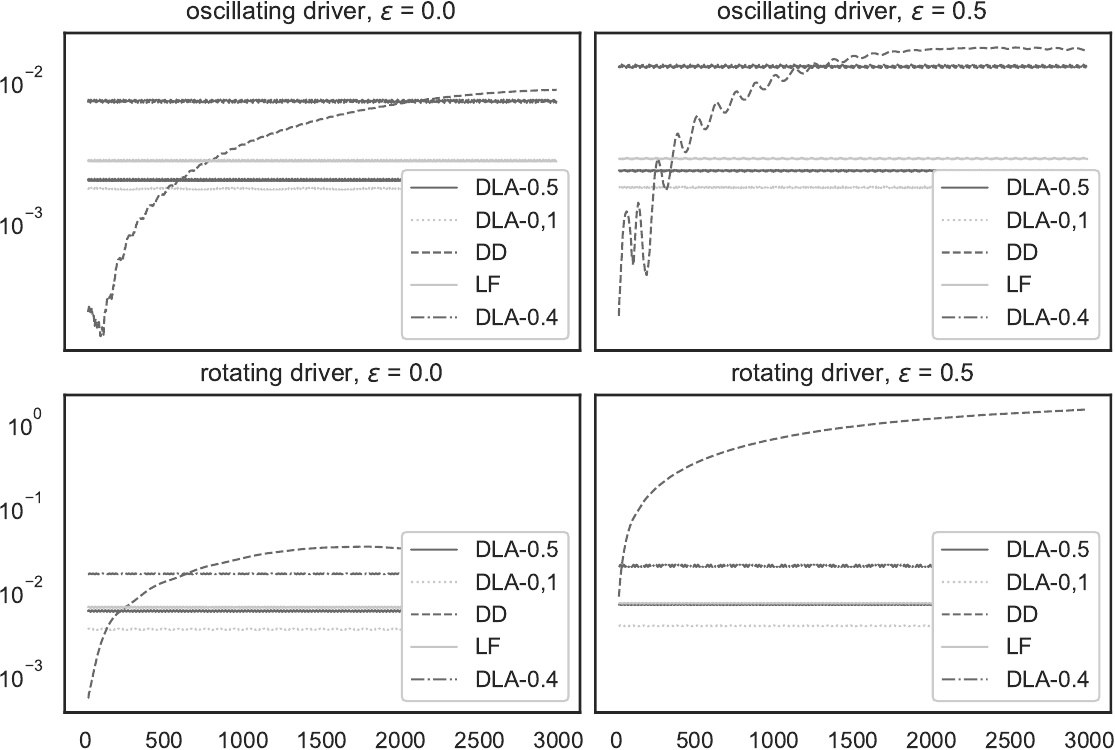}
	\caption{
	Error in the driver energy \eqref{small_energy} for the 5 methods applied to the pendulum driven CVT problem~\eqref{eq:pendulum_driven_cvt} for both the oscillating and rotating driver, and two different values of~$\varepsilon$.
	The data are convoluted by a running mean with a window of 30 time units.
	The only method that does not nearly conserve the driver energy is DD.
	Notice that even DLA${}^{0.4}$ gives a good behaviour, despite not being reversible: this is fully explained by the fact that the driver system in itself is a Hamiltonian system and DLA${}^{0.4}$ is a symplectic integrator in absence of constraints, so backward error analysis of symplectic integrators (cf.~\cite[\S\,IX.3]{HaLuWa2006}) fully explains the near conservation.
	}
	\label{fig:small_energy_cvt}
\end{figure}

\begin{figure}
  \centering 
	\includegraphics[width=0.99\textwidth]{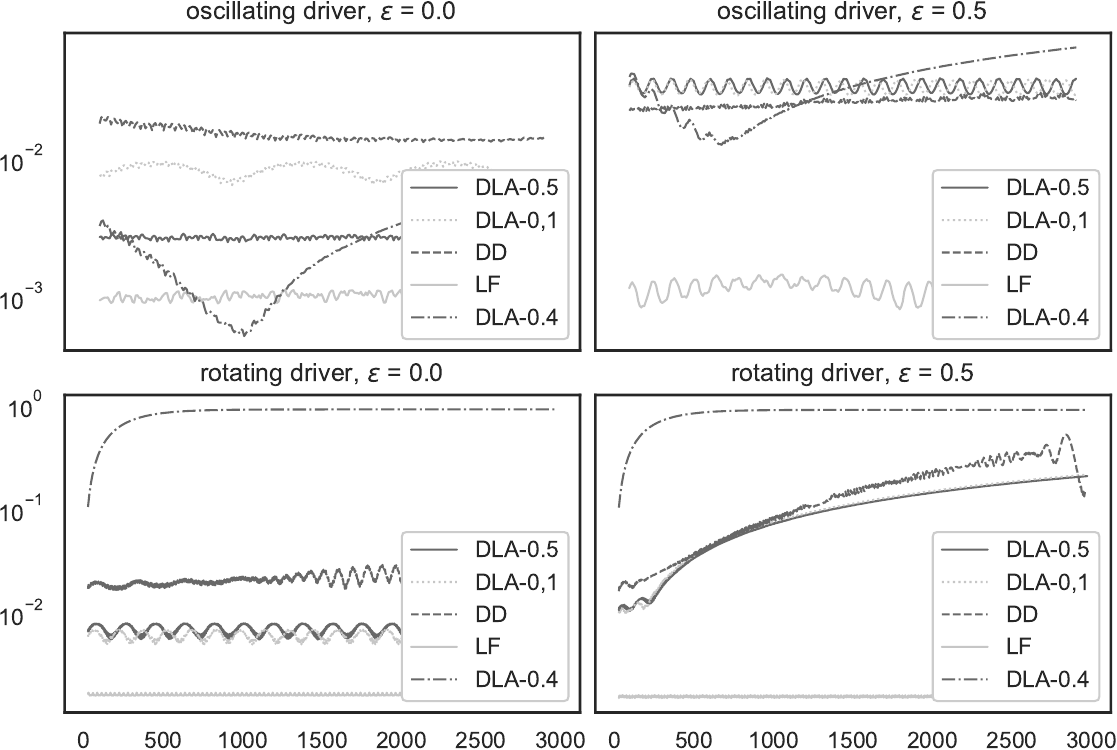}
	\caption{
	Error in the latitude integral corresponding to the extra first integral due to integrable structure of the CVT problem (see \autoref{prop:integsys}).
	This first integral is `hidden': we have no explicit formula for it.
	We can only compute it on a Poincar\'e section by sampling every period of the driver system.
	The data is convoluted by a running mean with a window of 30 time units.
	If the numerical integrator is reversible, then preservation of this integral is fully explained by reversible KAM theory, as discussed in \autoref{sec:KAM}.
	Notice that this explains all but one of the results:
	For the oscillating driver, where reversibility in the driver system is still preserved (corresponding to the green curve in \autoref{fig:phase_small_cvt}), or when $\varepsilon = 0$, all reversible integrators exhibit no drift.
	For the rotating driver with $\varepsilon\neq 0$, every integrator fails except the leap-frog method.
	It is an open problem to explain why the leap-frog method works for that particular system (note how leap-frog exhibits drift for the perturbed knife edge problem, see \autoref{fig:energy_knife_edge}).
	}
	\label{fig:latitude_cvt}
\end{figure}

\subsection{Mechanism for near conservation of integrals}
\label{sec:KAM}


The perturbation theory of Kolomogorov, Arnold, and Moser (KAM theory) comes in two flavours: symplectic and reversible.
In short, it states that Hamiltonian/reversible perturbations of Hamiltonian/reversible integrable systems nearly preserve invariant tori.
In combination with backward error analysis of numerical integrators, KAM theory rigorously explains near conservation of first integrals for symplectic/reversible integrators applied to Hamiltonian/reversible integrable systems.
For a thorough treatment we refer to the monograph by \citet{HaLuWa2006} and references therein.

KAM theory, however, is not readily applicable to nonholonomic systems: it applies to integrable systems of ODE.
A main result of this paper is that the class of nonholonomically coupled systems introduced in \autoref{sec:nonholcoupled} can be made compatible with the setting of reversible KAM theory for backward error analysis of reversible integrators, provided that the nonholonomic systems and integrators are reversible with respect to the same reversibility map.
We now state this result, which in turn relies on results proved in \autoref{sec:reversibility} and \autoref{sec:fibrations_over_int_sys}.




As always when analyzing constrained systems, the first step is to reduce the constrained system to an ordinary differential equation on a state space manifold $\Upstairs$. 
Details of how this is done for the nonholonomically coupled systems are given in \autoref{sec:reversibility} below.

\begin{definition}
\label{def:fibrevinteg}
  Consider a vector field $X$ on $\Upstairs$.
We assume that there is a surjection $\pi \colon \Upstairs \to \Downstairs$.
(In our case, $\Upstairs$ and $\Downstairs$ are vector spaces or cylinders, and this surjection is just a linear map.)
	Assume that $X$ descends to a vector field $Y$, i.e., $\pi_* X = Y$.
  We now say that $X$ is \emph{fibrated over an $R$-integrable system} if there is
	 an involution $R$ defined on $\Downstairs$, and the system $Y$ is $R$-integrable (in the precise sense of \autoref{def:Rintegrable} below).
\end{definition}

\begin{theorem}
\label{prop:fibrevinteg}
	The state space formulation of each unperturbed ($\varepsilon=0$) coupled nonholonomic system defined in \autoref{sec:nonholexamples} is fibrated over a reversible integrable system (\autoref{def:fibrevinteg}).
\end{theorem}
\begin{proof}
	The theorem is a consequence of the results presented in \autoref{sec:reversibility} and \autoref{sec:fibrations_over_int_sys} below, in particular the final result \autoref{prop:redrevinteg}. 
\end{proof}





We now describe the mechanism by which reversible KAM and backward error analysis can be adopted to ODEs fibrated over integrable systems.
Given a nonholonomic system whose state space formulation is fibrated over a reversible integrable system, with a projection $\pi$ and a reversibility map (an involution) $R\colon \Downstairs\to\Downstairs$ as in \autoref{def:fibrevinteg}, 
suppose that a nonholonomic integrator $\Phi_h\colon \Upstairs\to\Upstairs$ is compatible with $\pi$ and $R$ in the following sense:
	\begin{enumerate}[label=\upshape(\roman*)]
		\item\label{cond:descending}
			It descends to a method $\Psi_h\colon\Downstairs\to\Downstairs$, i.e.,
      \begin{equation}
        \Psi_h \circ \pi = \pi \circ \Phi_h
        .
      \end{equation}
\item\label{cond:reversible}
	The descending integrator $\Psi_h$ preserves the reversibility structure $R$, i.e.,
		\begin{equation}
			\label{eq:methodreversible}
	R \circ \Psi_h \circ R = \Psi_h\inv
  .
		\end{equation}
\end{enumerate}
Thus, $\Psi_h$ is a reversible integrator for the reversible integrable system on $\Downstairs$ corresponding to the vector field $Y$.
By backward error analysis of numerical integrators on manifolds (see \cite[Ch.~IX.2]{HaLuWa2006} or \cite{Ha2011b}) we then get, up to exponentially small terms and for exponentially long times, that the integrator $\Psi_h$ corresponds to the exact flow of a reversible vector field $Y_h$.
By the reversible KAM Theorem~\cite[Th.~2]{Se1998}, this explains the near conservation of the first integrals for the reversible system on $\Downstairs$.
Since these integrals are unaffected by the fibre motion in the fibration $\Upstairs\to\Downstairs$, and since the numerical integrator preserves the fibration, they are also nearly conserved as first integrals of the system on $\Upstairs$.
This explains near conservation of integrals for reversible perturbations of reversible integrable nonholonomic systems.
The final step is to verify that the conditions~\ref{cond:descending} and~\ref{cond:reversible} above for the integrators in \autoref{sec:integrators}.
This is straightforward.
Consider, for example, the knife edge example (\autoref{sec:knifeedge}).
In this case, the projection $\pi$ is just $\pi(\xx,\vv,\zz,\vz) = (\vv,\zz,\vz)$.
The condition~\ref{cond:descending} is true for all methods in \autoref{sec:integrators} except DD (the discrete derivative method).
Next, the reversibility map is $\rho(\vv,\zz,\vz) = (\vv,\zz,-\vz)$. 
A direct calculations shows that condition~\ref{cond:reversible} holds for all \DLA\ methods, except \DLA{\alpha} with $\alpha \neq \frac{1}{2}$.

To summarize our findings: if a nonholonomic integrator is compatible with the fibrated integrable structure of a nonholonomic coupled system, and at the same time respects the reversible structure of the integrable system, then reversible KAM theory fully explains the good long time behaviour.
If, however, the nonholonomic integrator fails to preserve either the fibration structure or the reversible structure of the underlying integrable ODE, then one cannot expect good long time behaviour.
The perturbed problems are exactly constructed to break these structures:
The perturbed Knife Edge breaks the fibration over an integrable system, whereas the perturbed CVT for the high energy level, although still fibrated over an integrable system, breaks reversibility.
Let us now discuss how our predictions adhere to the numerical results.

\subsection{Explanation of the numerical results}\label{sub:explanation}

\newcommand{\y}{$\surd$}
\newcommand{\n}{$\times$}

From the previous section we extract the following properties of the 5 nonholonomic integrators used in the examples above:

\begin{center}
	\begin{tabular}{rccccc}
								& DLA${}^{0.5}$ & DLA${}^{0.4}$ & DLA${}^{0,1}$ & DD & LF \\
		Preserves fibration		& \y 			& \y 			& \y 			& \n & \y \\
		Preserves reversibility & \y 			& \n 			& \y 			& \y & \y
	\end{tabular}
\end{center}

Let us now go through the two numerical examples (knife edge and CVT) and relate the performance of each method to our theoretical predictions.

By \autoref{prop:fibrevinteg} the unperturbed ($\varepsilon=0$) knife edge is fibrated over a reversible integrable system.
By our theoretical predictions, every method that preserves both the fibration and the reversibility should perform well.
And indeed they do, as confirmed in the left diagram of \autoref{fig:energy_knife_edge}.
We also see that the non-reversible method, DLA${}^{0.4}$, exhibits drift.
This constitutes strong evidence that reversibility, not the discrete Lagrange--d'Alembert principle, is behind near conservation.
Notice that DD exactly preserves the energy by construction.
Consider now the more interesting case of the perturbed ($\varepsilon=0.1$) knife edge.
Recall that this perturbation is constructed to destroy the fibration structure, so in this case our theoretical results cannot be applied.
In the right diagram of \autoref{fig:energy_knife_edge} we see that all methods, except of course DD, exhibit energy drift.
This shows that reversibility alone is not enough for near conservation; it is more intricate.

We now turn to the unperturbed ($\varepsilon=0$) CVT problem.
Recall that this problem has three first integrals: (i) the driver energy, (ii) the passenger energy, and (ii) the more complicated latitude integral.
Also recall that there are two types of drivers depending on the initial conditions: oscillating and rotation (see \autoref{fig:phase_small_cvt}).
By \autoref{prop:fibrevinteg} the system is fibrated over a reversible integrable system.
As expected from our theory, we see in the left columns of Figures~\ref{fig:energy_cvt}, \ref{fig:small_energy_cvt}, and \ref{fig:latitude_cvt} that the methods that are both fiber preserving and reversible nearly conserve the integrals.
Notice also that DD exhibit drift in all the integrals, although total energy is exactly conserved by construction.
This illustrates that methods that are ``forced'' to preserve one of the integrals generally do not perform well on other integrals.\footnote{There are generalizations of DD that preserves more than just one integral (so called \emph{energy-momentum methods}). We could have used such a method to preserve also the driver energy. However, our point is that hidden integrals, for which explicit formulae are not available, are not conserved by energy-momentum methods.}\
Notice in the left column of \autoref{fig:small_energy_cvt} that the driver energy is nearly conserved for DLA${}^{0.4}$ despite this integrator not being reversible.
This is expected since the driver subsystem is independent from the rest of the dynamics and is actually a Hamiltonian subsystem.
Thus, since DLA${}^{0.4}$ applied to Hamiltonian systems is a symplectic integrator (albeit not reversible), the standard theory of backward error analysis for symplectic integrators explains the near conservation of the driver energy.
(The passenger energy, however, is not conserved for DLA${}^{0.4}$.)

Finally, we turn to the perturbed ($\varepsilon=0.5$) CVT problem.
It is constructed to still be fibrated over an integrable system, but it is no longer reversible with respect to the standard reversibility map (being integrable it is, of course, still reversible with respect to a different, more complicated reversibility map).
Actually, there are two possible reversibility maps, reflection in the $\qq$-plane or the $\pp$-plane, and the reversible methods preserve both of these.
$\varepsilon \neq 0$ destroys the $\qq$-reversibility, but the $\pp$-reversibility remains.
However, when the driver is rotating the $\pp$-reversibility becomes ``invisible'' to the dynamics as illustrated in \autoref{fig:phase_small_cvt}.
Therefore, according to our theoretical predictions, the fibre preserving and reversible methods should perform well with an oscillating driver.
Indeed they do, as can be seen in the upper left diagram of Figures~\ref{fig:energy_cvt}, \ref{fig:small_energy_cvt}, and \ref{fig:latitude_cvt}:
only the non-reversible DLA${}^{0.4}$ and non-descending DD methods exhibit drift.
Since the independent driver subsystem is Hamiltonian, we also see for the rotating driver at the bottom right of \autoref{fig:small_energy_cvt} that all methods that are symplectic when applied to Hamiltonian systems preserve the driver energy, regardless of weather they are reversible or not.
So far, our theory is perfectly aligned with the numerical simulations in ``both directions'':
whenever the theory is applicable we observe near conservation (simply verifying the theoretical results), but also, whenever the theory is not applicable we observe drift.
According to this, we expect for a rotating driver (where our theory does not apply) to see drift in the passenger energy in all the methods.
In the bottom right of \autoref{fig:energy_cvt} we observe drift for all methods but LF.
This fact, that LF conserves the first integrals of a non-reversible nonholonomic problem, came as a surprise.
At this stage we have no explanation for it.
We predict, however, that there is some reversibility map, or some modified symplectic structure, that is shared by that particular problem and the integrator map, and which would allow us to apply KAM theory.

In the remainder of the paper we prove results leading to \autoref{prop:fibrevinteg} above.

\section{Linear, periodic systems: averaging and reversibility}
\label{sec:reversibility}

Systems of the form \eqref{eq:yve} in \autoref{sec:nonholexamples} can be written generally as
\begin{equation}
  \dot{\vect{u}} = \matA(\zz ) \vect{u}
\end{equation}
where $\zz$ is the solution of the driving system~\eqref{eq:driver} and $\matA(\zz)$ takes values in a Lie algebra $\g$.
Assume now that the energy function of the driving system $h(\zz,\vz) = \vz^2/2 + \smallV(\zz)$ has bounded level sets, so that solutions are periodic.
After a change of variables, the system $(\zz,\vz)$ can be rewritten using an angle variable $\theta$ such that $\theta' = \omega(h)$ for some function of the energy $h$ (the action variable).
We are thus led to study systems of the form
\begin{equation}
\label{eq:sysAtheta}
  \dot{\vect{u}} = \matA(\theta) \vect{u}
  ,
\end{equation}
where $\matA$ is periodic in $\theta$.

The first aim of this section is to show that systems of the form~\eqref{eq:sysAtheta} can be transformed into autonomous linear systems by means of \emph{averaging}.
The second aim is to give conditions under which this averaging transformation is a \emph{reversible map}.

The results in this section (\autoref{th:integrability} and \autoref{prop:reversible}) are generalizations of Theorem~3.1 and Theorem~3.2 in \cite{MoVe2014}.

\subsection{Averaging} 

We first study averaging of systems of the form~\eqref{eq:sysAtheta}.
Averaging means that after a reparametrization, the system \eqref{eq:sysAtheta} is equivalent to a system of the same form but with a \emph{constant} matrix $\matAbar$, which can be interpreted as the average of $\matA$.
We first reformulate systems of the form~\eqref{eq:sysAtheta} as follows:

\begin{definition}
\label{def:autoper}
	Let $\g$ be a Lie subalgebra of $\mathfrak{gl}(n,\RR)$. 
  A $\g$-periodic differential equation is a system of the form 
	\begin{equation}
	\label{eq:autoper}
	\begin{split}
		{\vect u}' &= \matA\big(\theta\big)\vect u\\
		{\theta}' &= 1
	\end{split}
	\end{equation}
	where $(\vect u,\theta) \in \RR^n \times \Torus$ and $\matA \colon \Torus \to \g$ is a smooth mapping.
\end{definition}

\begin{theorem}
\label{th:integrability}
Consider a $\g$-periodic system (\autoref{def:autoper}).
	Then there is a smooth change of variables
	\begin{equation}
	\Psi \colon \RR^n \times \Torus \ni (\vect u,\theta)\mapsto (\vect v(\vect u,\theta),\theta) \in \RR^{n} \times \Torus
	\end{equation}
	such that the $\g$-periodic system~\eqref{eq:autoper} expressed in the new variables $(\vect v,\theta)$ takes the form
	\begin{equation}
		\label{eq:averaged}
	\begin{split}
	{\vect v}' &= \matAbar \vect v \\
	{\theta}' &= 1
	\end{split}
	\end{equation}
	for a constant ``average'' element $\matAbar \in \g$.
\end{theorem}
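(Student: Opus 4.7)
The plan is to establish a Floquet-type decomposition on the Lie group $G$ whose Lie algebra is $\g$. Let $\Phi\colon \RR \to \GL(n,\RR)$ be the fundamental matrix solution of $\dot{\vect u} = \matA(\theta)\vect u$ with $\Phi(0) = I$. Because $\matA(\theta) \in \g$ for every $\theta$ and $\g$ is a Lie subalgebra, a standard argument (the tangent to $\Phi(\theta)^{-1}\dot\Phi(\theta)$ lies in $\g$, so the curve stays in the connected Lie subgroup $G \subset \GL(n,\RR)$ with Lie algebra $\g$) shows that $\Phi(\theta) \in G$ for all $\theta$. Set the monodromy $M \coloneqq \Phi(1) \in G$.

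Next, I would select a constant $\matAbar \in \g$ satisfying $\exp(\matAbar) = M$, and define
\begin{equation}
P(\theta) \coloneqq \Phi(\theta)\exp(-\theta\matAbar).
\end{equation}
Periodicity of $\matA$ gives $\Phi(\theta+1) = \Phi(\theta)M$ (by uniqueness for the ODE), hence
\begin{equation}
P(\theta+1) = \Phi(\theta)M\exp(-(\theta+1)\matAbar) = \Phi(\theta)\exp(-\theta\matAbar) = P(\theta),
\end{equation}
so $P\colon \Torus \to G$ is smooth and $1$-periodic. Now define the change of variables
\begin{equation}
\Psi\colon (\vect u,\theta) \mapsto (\vect v,\theta), \qquad \vect v \coloneqq P(\theta)^{-1}\vect u.
\end{equation}
Differentiating $\Phi = P\exp(\theta\matAbar)$ yields $\dot P = \matA P - P\matAbar$ (using that $\matAbar$ commutes with $\exp(\theta\matAbar)$), and substituting into $\dot{\vect u} = \matA(\theta)\vect u$ with $\vect u = P\vect v$ gives
\begin{equation}
P\dot{\vect v} = \matA P\vect v - \dot P\vect v = P\matAbar \vect v,
\end{equation}
so $\dot{\vect v} = \matAbar\vect v$ as desired.

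The main obstacle is the choice of $\matAbar$ in step two: we must find a \emph{logarithm of the monodromy inside the Lie algebra $\g$}, not merely inside $\gl(n,\RR)$. In general the exponential map $\exp\colon \g \to G$ need not be surjective, so this requires an argument tailored to the structure of $G$. For all Lie algebras arising in \autoref{sec:nonholexamples} ($\so(2)$, $\so(3)$, $\se(m+1)$, $\RR$, and the trivial algebra) the associated connected Lie groups are either compact or semidirect products of a compact group with a vector space, and for these exponential surjectivity is classical; in the general case one can either invoke surjectivity hypotheses on $G$ or, if needed, pass to a double cover / double period so that $M^2$ lies in the image of $\exp$. Once $\matAbar \in \g$ is secured, the rest is the Floquet computation above, and smoothness of $P$ (hence of $\Psi$) is inherited from smoothness of $\Phi$ and of $\exp$. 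This is precisely the generalization of Theorem~3.1 of \cite{MoVe2014} stated here.
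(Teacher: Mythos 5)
Your proof is correct and follows essentially the same route as the paper: both are the classical Floquet argument, and your change of variables $\vect v = P(\theta)^{-1}\vect u$ with $P(\theta)^{-1} = \exp(\theta\matAbar)\Phi(\theta)^{-1}$ is exactly the paper's $\vect v(\vect u,\theta) = \exp(\matAbar\theta)\flow(\theta)^{-1}\vect u$. If anything, you are more explicit than the paper about the one delicate point --- the existence of a logarithm of the monodromy inside $\g$ --- which the paper handles by simply assuming surjectivity of $\exp$ onto $\Group$.
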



\begin{proof}

One defines the flow map $\flow(\thera)$ as the solution operator of the differential equation defined for all $\thera \in \RR$ by
\begin{equation}\label{eq:nonautonomous}
	\frac{\ud\vect w(\thera)}{\ud\thera} = \matA(\thera)\vect w(\thera)	
\end{equation}
with initial condition at $\thera = 0$ --- the initial time matters because the differential equation is not autonomous.
This means that if $\vect w$ is a solution of~\eqref{eq:nonautonomous}, then
\begin{equation}
\label{eq:defflowmap}
\vect w(\thera) = \flow(\thera) \vect w(0) \qquad \forall \tau \in \RR
,
\end{equation}
and vice versa.

\newcommand*\bPsi{\overline{\Psi}}
Since $\matA(\thera)\in\g$ for all $\thera$, the flow map after one period, i.e., $\perPhi$, is an element of $\Group$.
Let $\Group \subset \GL(n,\RR)$, the connected Lie group corresponding to the Lie algebra $\g$.
As the exponential is surjective from $\g$ to $\Group$, there exists $\matAbar\in\g$ such that
\begin{equation}
\label{eq:defmatAbar}
\perPhi = \exp(\matAbar)
.
\end{equation}
We define the mapping $\bPsi \colon \RR^n \times \RR \to \RR^n \times \RR$ by $\bPsi(\vect u,\thera) = \big(\vect v(\vect u,\thera),\thera\big)$ and
\begin{equation}
  \vect v(\vect u,\thera) := \exp(\matAbar \thera) \flow(\thera)\inv \vect u
.
\end{equation}

Recall that, as $\matA$ is periodic, for any integer $n\in\ZZ$ and any $\tau\in\RR$ we have (cf.\ Floquet theory \cite[\S\!~28]{Ar2006})
\begin{equation}
\label{eq:Floquet}
\flow(n+\tau) = \flow(\tau)\flow(n)
.
\end{equation}
Now, $\vect v(\vect u,\thera)$ is periodic in $\thera$, of period one, because, due to~\eqref{eq:Floquet}, and the definition~\eqref{eq:defmatAbar} of $\matAbar$,
\begin{equation}
\exp\big(\matAbar(\thera+1)\big)\flow(\thera+1)\inv = \exp(\matAbar\thera)\exp(\matAbar)\perPhi\inv\flow(\thera)\inv =  \exp(\matAbar\thera)\flow(\thera)\inv
.
\end{equation}
As a result, the mapping $\bPsi$ induces a mapping $\Psi \colon \RR^n \times \Torus \to \RR^n \times \Torus$.
Note that, as $\bPsi$ is a smooth change of variables, so is $\Psi$.
We now proceed to show that $\bPsi$ sends solutions of~\eqref{eq:autoper} to solutions of~\eqref{eq:averaged}.

Consider a solution $ \big(\vect u(t),\thera(t)\big)$ of the differential equation 
\begin{equation}
	\begin{split}
		{\vect u}'(t) &= \matA\big(\tau(t)\big)\vect u(t) \\
		{\tau}'(t) &= 1
	\end{split}
\end{equation}
Define $t_0 = -\thera(0)$.
Clearly we then have $\thera(t) = t - t_0$ and $\vect{u}'(t) = \matA(t-t_0)\vect{u}(t)$.
By defining $\vect w(\sigma) \coloneqq \vect u(\sigma + t_0)$ we obtain
\(
\vect w'(\sigma) =  {\vect u}'(\sigma + t_0) =    \matA(\sigma)\vect{u}(\sigma + t_0) = \matA(\sigma)\vect{w}(\sigma)
\),
so $\vect w$ is a solution of~\eqref{eq:nonautonomous}.
As a result, $\vect w(\sigma) = \flow(\sigma)\vect w(0)$, which, using $\vect u(t) = \vect w\big(\thera(t)\big)$, gives 
\(
\vect u(t) =  \flow\big(\thera(t)\big) \vect u(t_0)
\).
We thus obtain that along a solution $\big(\vect u(t),\thera(t)\big)$,
\begin{equation}
\label{eq:phiinvuz}
\flow\big(\thera(t)\big)\inv \vect u(t) = \vect u(t_0)
.
\end{equation}
As a result, we obtain
\begin{equation}
	{\vect v}'(t) = \matAbar \exp(\matAbar\thera) \vect u(t_0) = \matAbar \vect v(t)
,
\end{equation}
which proves the result for the mapping $\bPsi$ and thus also for $\Psi$.
\end{proof}

\subsection{Reversibility} 
\label{sub:step_3}

We now turn to the question of whether the mapping $\Psi$ defined in \autoref{th:integrability} can preserve a reversibility structure.
We namely equip the space $\RR^n\times\Torus$ with a linear involution $R$, defined as
\begin{equation}
\label{eq:Rdef}
R(\vect u,\theta) \coloneqq (\rho \vect u, -\theta)
,
\end{equation}
for a given linear involution $\rho$.

We first observe the following condition on $\rho$ which will turn out to be essential for the preservation of the reversibility structure $R$ in \autoref{prop:reversible}.
\begin{lemma}
The $\g$-system (\autoref{def:autoper}) is reversible with respect to~$R$ if and only if
\begin{equation}
\label{eq:Areversible}
\rho \matA(\theta) \rho = -\matA(-\theta)
.
\end{equation}
\end{lemma}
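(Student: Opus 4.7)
The plan is to unravel the standard definition of reversibility at the level of the generating vector field. A system $\dot{p} = X(p)$ is reversible with respect to an involution $R$ precisely when its flow $\flow_t$ satisfies $R \circ \flow_t = \flow_{-t} \circ R$; differentiating this relation at $t = 0$ gives the infinitesimal condition $DR(p)\cdot X(p) = -X(R(p))$ for all $p$. Since the involution $R(\vect u,\theta) = (\rho \vect u, -\theta)$ is linear, $DR$ equals $R$ everywhere, so the reversibility condition reduces to the pointwise equality $R\cdot X(\vect u,\theta) = -X\bigl(R(\vect u,\theta)\bigr)$, with no Jacobians to track.

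Next I would plug in the specific vector field associated with the $\g$-periodic system, $X(\vect u,\theta) = \bigl(\matA(\theta)\vect u,\, 1\bigr)$, and compute both sides componentwise. The left-hand side becomes
\begin{equation}
R\cdot X(\vect u,\theta) = \bigl(\rho\matA(\theta)\vect u,\, -1\bigr),
\end{equation}
while the right-hand side, using $R(\vect u,\theta) = (\rho \vect u,-\theta)$, is
\begin{equation}
-X(\rho \vect u,-\theta) = -\bigl(\matA(-\theta)\rho \vect u,\, 1\bigr) = \bigl(-\matA(-\theta)\rho \vect u,\, -1\bigr).
\end{equation}
The $\theta$-components match automatically, and equating the $\vect u$-components for every $\vect u \in \RR^n$ yields the matrix identity $\rho\matA(\theta) = -\matA(-\theta)\rho$. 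Multiplying on the right by $\rho$ and invoking $\rho^2 = \id$ (because $\rho$ is an involution) gives $\rho\matA(\theta)\rho = -\matA(-\theta)$, which is precisely~\eqref{eq:Areversible}. The converse direction follows by reading the same chain of equalities in reverse, so the equivalence is proved.

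There is no real obstacle here; the statement is essentially a one-line computation once the vector-field formulation of reversibility is written down and the linearity of $R$ is used to identify $DR$ with $R$. The only point worth stressing in the exposition is the bookkeeping: one must remember to evaluate $\matA$ at $-\theta$ on the right-hand side (because $R$ flips the sign of $\theta$), which is exactly what produces the asymmetry between the two sides of~\eqref{eq:Areversible}.
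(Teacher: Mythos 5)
Your proof is correct and follows essentially the same route as the paper: write the system as the vector field $f(\vect u,\theta)=(\matA(\theta)\vect u,1)$, use linearity of $R$ to reduce reversibility to the pointwise identity relating $f$ and $f\circ R$, and compare $\vect u$-components to extract~\eqref{eq:Areversible}. The only cosmetic difference is that you phrase the condition as $R\cdot f = -f\circ R$ while the paper writes $-f = R\cdot(f\circ R)$; these are equivalent since $R$ is an involution.
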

\begin{proof}
  Let $f(\vect u, \theta) \coloneqq (\matA(\theta) \vect u, 1)$ be the vector field defining the differential equation \eqref{eq:autoper}.
  As $R$ is linear, reversibility with respect to $R$ means that $-f(\vect u, \theta) = R f\paren[\big]{R(\vect u, \theta)}$.
  Note that $f(R(\vect u,\theta)) = f(\rho \vect u, -\theta) = (\matA(-\theta)\rho\vect u,1)$.
  This gives the condition $(-\matA(\theta) \vect u, -1) = \paren{\rho\matA(-\theta)\rho \vect u, -1}$, which proves the claim.
\end{proof}

We now turn to the main result of this section.

\begin{theorem}
\label{prop:reversible}
Assume that~\eqref{eq:Areversible} holds.
Assume further that  the average matrix $\matAbar \in\g$  defined in \autoref{th:integrability} is such that
\begin{equation}
\label{eq:nohalfrotation}
\matAbar \subset \Omega
,
\end{equation}
where $\Omega$ is a subset of $\g$ where the exponential is injective, and such that $-\Omega \subset \Omega$, and that $\rho \Omega \rho \subset \Omega$.
Then the averaging mapping $\Psi\colon \RR^{n}\times\Torus\to\RR^{n}\times\Torus$, defined in \autoref{th:integrability}, preserves reversibility, i.e., $R\circ\Psi = \Psi\circ R$.
\end{theorem}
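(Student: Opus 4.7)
The plan is to verify the identity $R\circ\Psi = \Psi\circ R$ by direct computation, after establishing two symmetry relations: one for the non-autonomous flow $\flow(\tau)$ and one for the averaged generator $\matAbar$. Writing out both sides using the formula $\vect v(\vect u,\theta) = \exp(\matAbar\theta)\flow(\theta)^{-1}\vect u$, the identity $R\Psi = \Psi R$ is equivalent to
\begin{equation*}
\rho\, \exp(\matAbar\theta)\, \flow(\theta)^{-1} = \exp(-\matAbar\theta)\, \flow(-\theta)^{-1}\,\rho \qquad \forall\,\theta,
\end{equation*}
since the second component is $-\theta$ on both sides. This will follow once we show (i) $\rho\,\flow(\tau)\,\rho = \flow(-\tau)$ and (ii) $\rho\,\matAbar\,\rho = -\matAbar$, because then $\rho\exp(\matAbar\theta) = \exp(\rho\matAbar\rho\cdot\theta)\rho = \exp(-\matAbar\theta)\rho$ and $\rho\flow(\theta)^{-1} = \flow(-\theta)^{-1}\rho$ may be applied in sequence.

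First I would establish (i) by a uniqueness argument. Define $\widetilde{\flow}(\tau) \coloneqq \rho\,\flow(-\tau)\,\rho$. Since $\rho^2 = \id$, we have $\widetilde{\flow}(0) = \id = \flow(0)$. Differentiating and using the reversibility condition~\eqref{eq:Areversible} in the form $\rho\,\matA(-\tau)\,\rho = -\matA(\tau)$ gives
\begin{equation*}
\widetilde\flow\,'(\tau) = -\rho\,\matA(-\tau)\,\flow(-\tau)\,\rho = \bigl(-\rho\,\matA(-\tau)\,\rho\bigr)\,\widetilde\flow(\tau) = \matA(\tau)\,\widetilde\flow(\tau),
\end{equation*}
so $\widetilde\flow$ solves the same linear ODE as $\flow$ with the same initial condition, forcing $\widetilde\flow(\tau) = \flow(\tau)$. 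Taking inverses yields $\rho\,\flow(\tau)^{-1}\,\rho = \flow(-\tau)^{-1}$.

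Next I would derive (ii) by evaluating (i) at the period $\tau=1$ and invoking injectivity of $\exp$ on $\Omega$. From \eqref{eq:Floquet} with $n=-1,\tau=1$ we get $\flow(-1) = \flow(1)^{-1} = \exp(-\matAbar)$, while (i) gives $\flow(-1) = \rho\,\flow(1)\,\rho = \rho\,\exp(\matAbar)\,\rho = \exp(\rho\matAbar\rho)$. Therefore $\exp(\rho\matAbar\rho) = \exp(-\matAbar)$. By the hypotheses of the theorem, $\matAbar \in \Omega$, and the assumptions $-\Omega\subset\Omega$ and $\rho\Omega\rho\subset\Omega$ ensure that both $-\matAbar$ and $\rho\matAbar\rho$ lie in $\Omega$; injectivity of $\exp$ on $\Omega$ then gives $\rho\,\matAbar\,\rho = -\matAbar$, as required. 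This is the only step where all three hypotheses on $\Omega$ play a genuine role, and is where I expect the main conceptual point of the proof to sit.

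Finally, with (i) and (ii) in hand the verification is a three-line computation: $\rho\exp(\matAbar\theta)\flow(\theta)^{-1} = \exp(-\matAbar\theta)\rho\flow(\theta)^{-1} = \exp(-\matAbar\theta)\flow(-\theta)^{-1}\rho$, which precisely matches the required identity. The main obstacle is the logarithm step (ii); once injectivity of $\exp$ restricted to $\Omega$ is combined with the symmetry properties of $\Omega$, everything else reduces to bookkeeping with linear involutions.
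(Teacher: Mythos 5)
Your proposal is correct and follows essentially the same route as the paper: establish the flow symmetry $\rho\,\flow(\tau)\,\rho=\flow(-\tau)$, deduce $\rho\matAbar\rho=-\matAbar$ from $\exp(\rho\matAbar\rho)=\exp(-\matAbar)$ via injectivity of $\exp$ on $\Omega$, and then verify $\vect v(\rho\vect u,-\theta)=\rho\vect v(\vect u,\theta)$ by composing the two relations. In fact your uniqueness-of-solutions argument supplies the proof of the flow-symmetry identity that the paper merely asserts, so your write-up is if anything slightly more complete.
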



\begin{proof}

Recall that the flow map $\flow$ is defined by~\eqref{eq:defflowmap}.
Using~\eqref{eq:Areversible}, one shows that
\begin{equation}
\rho\flow(-\tau) =  \flow(\tau) \rho
.
\end{equation}
\itodo*[OV]{Possibly prove the claim above}
This implies in particular that
\(
\rho \flow(-1) = \perPhi \rho
\),
so using~\eqref{eq:Floquet}, we obtain
\begin{equation}
\label{eq:mirrorrot}
\rho \perPhi\inv = \perPhi \rho
.
\end{equation}
Now, recalling the definition of $\matAbar$ in~\eqref{eq:defmatAbar}, we notice that~\eqref{eq:mirrorrot} implies that
\begin{equation}
\exp(-\matAbar) = \rho\exp(\matAbar)\rho = \exp(\rho\matAbar\rho)
.
\end{equation}
As $-\matAbar \in -\Omega \subset \Omega$ and $\rho\matAbar\rho \in\rho\Omega\rho \subset \Omega$, we use the injectivity of the exponential and deduce that $-\matAbar = \rho \matAbar \rho$.
We therefore obtain
\begin{equation}
\label{eq:commutexprho}
\exp(-\matAbar \thera) \rho = \rho \exp(\matAbar\thera)
.
\end{equation}
By combining~\eqref{eq:mirrorrot} and~\eqref{eq:commutexprho} we get
\begin{equation}
\begin{split}
\vect v(\rho \vect u, -\thera) &= \exp(-\matAbar\thera)\flow(-\thera)\inv \rho \vect u  \\
&= \exp(-\matAbar\thera) \rho \flow(\thera)\inv \vect u \\
&= \rho \exp(\matAbar \thera) \flow(\thera)\inv \vect u
\end{split}
\end{equation}
so we finally obtain
\begin{equation}
\vect v(\rho \vect u, -\thera) = \rho \vect v
.
\end{equation}
This finishes the proof.
\end{proof}

\section{Fibrations over reversible integrable systems}\label{sec:fibrations_over_int_sys}

The goal of this section is to show that all the nonholonomically coupled systems studied in \autoref{sec:nonholexamples} are fibrated over a reversible integrable system, as summarized in \autoref{tab:summary}.
The situation is illustrated in \autoref{fig:spiral}.

\subsection{Reversible integrability}

\begin{figure}
	\centering
%
%

\newcommand*\spircircle[3]{
{	
\newcommand*\prad{#1}
\newcommand*\zvel{#2}

\addplot3+[,ytick=\empty,yticklabel=\empty,
  mark=none,
  thick,
  #3,
  domain=10:14.7*pi,
  samples=400,
  samples y=0,
]
({\prad*sin(0.28*pi*deg(x))},{\prad*cos(0.28*pi*deg(x)},{\zvel*x});

\addplot3+[,ytick=\empty,yticklabel=\empty,
  mark=none,
  thick,
  #3,
  domain=0:1,
  samples=400,
  samples y=0,
]
({\prad*sin(2*pi*deg(x))},{\prad*cos(2*pi*deg(x)},{0});
}
}

\begin{tikzpicture}
\begin{axis}[
 view={-10}{-30},
 axis lines=middle,
 zmax=60,
 hide axis,
 xmin = -15,
 xmax=20,
 ymin = -15,
 ymax=15,
 height=6cm,
 xtick=\empty,
 ytick=\empty,
 ztick=\empty
]

\spircircle{3}{1.7}{Black}
\spircircle{8}{2}{Gray}

\newcommand*\extent{12}
\draw (axis cs:\extent,-\extent,0) -- (axis cs:\extent,\extent,0) -- (axis cs:-\extent,\extent,0) -- (axis cs:-\extent,-\extent,0) -- cycle;

\node at (axis cs:\extent,-\extent,0) [right] {$\Downstairs$};



%
\end{axis}
\end{tikzpicture}

	\caption[]{
		An illustration of the setting of the examples treated in this paper.
		The actual system is represented by the spirals in the manifold $\Upstairs$.
		The system, however, sits above an integrable system, which foliation in tori is represented downstairs on $\Downstairs$.
		If the numerical integrator descends to a reversible integrator downstairs, then there is no drift in the first integral of that system.
		Moreover, as the energy depends only on these invariants, there is no energy drift either.
	}
\label{fig:spiral}
\end{figure}

\newcommand*\action{I}

We briefly recall the definition of an \emph{$R$-integrable}, or \emph{reversible integrable}, system~\cite{Se1998}.

\begin{definition}
\label{def:Rintegrable}
Consider a manifold $\Downstairs$,  equipped with an involution $R$.
A dynamical system, i.e., a vector field $Y$ on the manifold $\Downstairs$, is \emph{$R$-integrable} if there is a map $\varphi \colon \Downstairs \to \RR^p \times (\RR/\ZZ)^k $, which we denote by $\varphi(x) = (\action(x), \theta(x))$, such that
\begin{enumerate}[label=\upshape(\roman*)]
	\item  $\varphi \circ R = (\action, -\theta)$
	\item  the induced vector field is $\dot\action = 0$ and $\dot\theta = \omega(\action)$ for a given \emph{frequency map} $\omega$
  \item
    \label{def:nondegenerate}
    the image of the frequency map $\omega$ does not lie in a proper linear subspace.
\end{enumerate}
\end{definition}

\begin{remark}
  The last condition~\ref{def:nondegenerate} is called a \emph{nondegeneracy}, or \emph{non resonance}, condition~\cite{Se1998}.
  Note that the usual non resonance condition (also called \emph{diophantine} condition~\cite[\S X.2.1]{HaLuWa2006}) is not strong enough for our examples, as discussed in~\cite{MoVe2014}.
\end{remark}

We first make a statement about some $\g$-periodic systems (\autoref{def:autoper}).

\begin{proposition}
\label{prop:integsys}
	Consider a $\g$-periodic system.
	We assume that the average matrix $\matAbar$ from \autoref{th:integrability} is either zero, or, if $n \leq 3$, an element of $\so(n)$.
	Suppose further that there exists a map $\rho$ fulfilling~\eqref{eq:Areversible}.
	Then the system is $R$-integrable (\autoref{def:Rintegrable}), with $R$ defined in~\eqref{eq:Rdef}.
\end{proposition}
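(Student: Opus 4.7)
The plan is to reduce to the averaged system via \autoref{th:integrability}, transport the reversibility through the reduction via \autoref{prop:reversible}, and then equip the averaged system with explicit action-angle coordinates. To invoke \autoref{prop:reversible} we need a suitable subset $\Omega \subset \g$: for $\matAbar = 0$ take any sufficiently small ball around the origin in a $\rho$-invariant norm on $\g$; for $\matAbar \in \so(n)$ with $n \leq 3$ take $\Omega$ to be the open ball about the origin of operator-norm radius $\pi$ on which $\exp$ is injective, choosing $\matAbar$---only defined up to $\ker \exp$---to lie in this ball. Since $\rho$ is an orthogonal involution the operator norm is invariant under both $X \mapsto -X$ and $X \mapsto \rho X \rho$, so all three conditions of \autoref{prop:reversible} hold. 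The averaging diffeomorphism $\Psi$ therefore intertwines $R$, reducing the problem to showing $R$-integrability of the averaged system $\vect{v}' = \matAbar \vect{v}$, $\theta' = 1$.

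For the averaged system I would introduce action-angle coordinates case by case. If $\matAbar = 0$, set $\action \coloneqq \vect{v}$ with $\theta$ the only angle, of constant frequency $1$. If $\matAbar \in \so(2) \setminus \{0\}$, use polar coordinates $(r, \phi)$ on $\RR^2$: set $\action \coloneqq r$ and angles $(\phi, \theta)$ with constant frequencies $(\omega_0, 1)$, where $\omega_0$ is the rotational frequency generated by $\matAbar$. If $\matAbar \in \so(3) \setminus \{0\}$, decompose $\vect{v} = v_\parallel \vect{e} + \vect{v}_\perp$ with $\vect{e}$ a unit vector spanning $\ker \matAbar$, introduce polar coordinates $(r, \phi)$ on $\vect{e}^\perp$, and set $\action \coloneqq (v_\parallel, r)$ with angles $(\phi, \theta)$. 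The identity $-\matAbar = \rho \matAbar \rho$ proved inside \autoref{prop:reversible} applied to $\vect{e}$ shows that $\rho \vect{e} \in \ker \matAbar$, hence $\rho \vect{e} = \pm\vect{e}$; the same identity restricted to $\vect{e}^\perp$ then forces $\rho|_{\vect{e}^\perp}$ to anti-commute with the rotation generator and therefore to be a reflection. Choosing the polar frame so that this reflection is $\phi \mapsto -\phi$ (and in the exceptional case $\rho\vect{e} = -\vect{e}$ replacing $v_\parallel$ by $|v_\parallel|$) makes $\rho$ fix $\action$ and negate $\phi$, which combined with $\theta \mapsto -\theta$ from $R$ yields condition (i) of \autoref{def:Rintegrable}. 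Condition (ii) is immediate from the averaged ODE.

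The delicate step is the nondegeneracy condition (iii) of \autoref{def:Rintegrable}. Since $\matAbar$ is constant the frequency map does not depend on $\action$, and its image is a single point, so the condition must be interpreted in the rational-independence sense flagged in the remark following \autoref{def:Rintegrable}---amounting in the $\so(2)$ and $\so(3)$ cases to nonresonance between $\omega_0$ and $1$, and in the $\matAbar = 0$ case trivially to $1 \neq 0$. This verification is system-dependent and generically holds but may fail at exceptional parameter values; it is the principal obstacle and the reason the usual diophantine formulation is not strong enough here.
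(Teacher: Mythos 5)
Your proof follows the same route as the paper's own (three-sentence) proof: invoke \autoref{th:integrability} and \autoref{prop:reversible} to pass to the constant-coefficient averaged system, then read off action--angle variables, the single extra angle coming from the rotation $\exp(\matAbar)$ when $\matAbar\in\so(n)$, $n\le 3$; your version simply supplies the details (the choice of $\Omega$, the $R$-compatible polar coordinates, the fact that $\rho$ preserves the rotation axis) that the paper omits. You are also right to flag that the nondegeneracy condition (iii) of \autoref{def:Rintegrable} does not follow from the stated hypotheses for a constant frequency map -- the paper's proof silently skips this point as well, so it is a lacuna of the statement itself rather than of your argument.
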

\begin{proof}
	The assumptions of \autoref{th:integrability} and \autoref{prop:reversible} are fulfilled, so we obtain a variable transformation which brings the system to the form~\eqref{eq:averaged}.
	If $\matAbar$ is zero, there are only action variables.
	If $\matAbar$ is in $\so(n)$ for $n \leq 3$, we have one more angle variable determined by the only angle of the rotation matrix $\exp(\matAbar)$.
\end{proof}

\begin{proposition}
\label{prop:redrevinteg}
All the reduced systems defined in \autoref{sec:nonholexamples} fulfill the assumptions of \autoref{prop:integsys}.
\end{proposition}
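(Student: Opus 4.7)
The plan is to verify the three hypotheses of \autoref{prop:integsys} for each of the five unperturbed examples of \autoref{sec:nonholexamples}, relying on the data tabulated in \autoref{tab:summary}.

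First, I would place every reduced system into the $\g$-periodic form of \autoref{def:autoper}. The driver $h(\zz,\vz) = \vz^2/2 + \smallV(\zz)$ is a one-degree-of-freedom Hamiltonian whose orbits of interest are closed loops (the harmonic oscillator, the oscillating or rotating pendulum, and the free rotation on $\TT$ when $\smallV = 0$), so Liouville--Arnold supplies action-angle coordinates $(I,\theta)$ with $\dot\theta = \omega(I)$; a rescaling of time normalises this to $\dot\theta = 1$ and recasts \eqref{eq:yve} as $\dot{\vect u} = \matA(\theta)\vect u$ with $\matA(\theta) = \matA(\zz(\theta))$ periodic. The ambient Lie algebras were already identified in \autoref{sec:nonholexamples}: $\so(3)$ for both CVTs, $\so(2)$ for the nonholonomic particle, the one-dimensional abelian algebra for the knife edge, and the trivial algebra for the rolling disk and mobile robot.

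For the averaged-matrix condition, when $\g = \so(n)$ with $n \leq 3$ closedness of $\so(n)$ as a linear subspace gives $\matAbar \in \so(n)$ automatically, which covers the CVTs and the particle. For the rolling disk and mobile robot the matrix in \eqref{eq:yve} is identically zero and so is its average. The only case requiring a computation is the knife edge, whose single nontrivial entry is $\cos(\zz(\theta))$; since $\smallV = 0$, $\zz$ is affine in $\theta$ and traces one full loop of $\TT$ per period, so $\int_0^1 \cos(\zz(\theta))\,\ud\theta = 0$ and $\matAbar = 0$.

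The reversibility condition $\rho\matA(\theta)\rho = -\matA(-\theta)$ is then verified with $\rho$ as listed in \autoref{tab:summary}. For orbits with a turning point (harmonic-driven CVT, oscillating pendulum-driven CVT, nonholonomic particle) I would place the origin of $\theta$ at the turning point; the driver reversibility $\vz \to -\vz$ then forces $\zz(-\theta) = \zz(\theta)$, so $\matA(-\theta) = \matA(\theta)$, and a block computation using the structure $\matA = \begin{bmatrix}0 & \kstiff \\ -\kstiff\trans & 0\end{bmatrix}$ shows that the tabulated $\rho$ (a sign flip of $\vv$) conjugates $\matA$ into $-\matA$. For the knife edge and rolling-disk families, $\zz$ is linear in $\theta$; placing the origin at a symmetry point of the trigonometric entries makes $\matA$ even in $\theta$ and the tabulated $\rho$ again works.

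The main obstacle is the rotating regime of the pendulum-driven CVT, where there is no turning point and the preceding argument breaks down. Here one must invoke the additional discrete symmetry $\zz \to -\zz$ of the unperturbed potential $\smallV(\zz) = \cos(\zz)$: placing the origin of $\theta$ at $\zz = 0$ yields $\zz(-\theta) = -\zz(\theta)$, and together with the identity $\kker(-\zz) = \mathrm{diag}(-1,1)\kker(\zz)$ this produces a suitable non-standard $\rho$ that flips the first passenger coordinate rather than $\vv$. The fragility of this construction --- it rests on a ``non-physical'' symmetry $\zz \to -\zz$ of the driver potential --- is precisely what explains the numerical collapse for the $\varepsilon \neq 0$ high-energy CVT in \autoref{sec:excvt}: the perturbation in \eqref{eq:pendulum_pot} is engineered to destroy exactly that symmetry.
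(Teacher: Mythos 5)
Your strategy coincides with the paper's: put each example into the $\g$-periodic form of \autoref{def:autoper}, note that the condition on $\matAbar$ in \autoref{prop:integsys} is automatic whenever $\g$ is $\so(2)$, $\so(3)$ or trivial (by \autoref{th:integrability}, $\matAbar\in\g$ by construction, so no appeal to closedness of $\so(n)$ as a linear subspace is really needed), compute that the knife-edge average of $\cos\zz$ over one period vanishes, and then exhibit an involution $\rho$ satisfying \eqref{eq:Areversible}. You are in fact more careful than the paper's own proof, which simply asserts that the tabulated $\rho$ works: in particular you correctly isolate the rotating pendulum-driven CVT as the delicate case, where the turning-point argument fails and one must instead invoke the evenness of $\smallV(\zz)=\cos\zz$ --- precisely the ``non-physical'' symmetry $\zz\mapsto-\zz$ that the paper only mentions in the caption of \autoref{fig:phase_small_cvt}.

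Two of your explicit verifications carry sign errors, both locally repairable. First, for the knife edge the tabulated $\rho$ is the identity, so \eqref{eq:Areversible} reads $\matA(\theta)=-\matA(-\theta)$: you need $\cos\bigl(\zz(\theta)\bigr)$ to be \emph{odd} in $\theta$, not even, which forces the origin of $\theta$ to sit at a zero of the cosine ($\zz=\pi/2$ mod $\pi$) rather than at a symmetry point of its graph; with $\matA$ even and nonzero the identity $\rho$ gives $\matA(\theta)=-\matA(\theta)$, a contradiction. Second, for the rotating CVT, writing $\kker(-\zz)=\mathrm{diag}(-1,1)\,\kker(\zz)$ and $\rho=\mathrm{diag}(a,b,c)$ on $(y_1,y_2,\vv)$, condition \eqref{eq:Areversible} requires $ac=1$ and $bc=-1$; flipping only the first passenger coordinate gives $(a,b,c)=(-1,1,1)$ and hence $\rho\matA(\theta)\rho=+\matA(-\theta)$, the wrong sign. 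The admissible choices are $\mathrm{diag}(1,-1,1)$ (flip $y_2$) or $\mathrm{diag}(-1,1,-1)$ (flip $y_1$ together with $\vv$). With these corrections your argument is complete and, for the rotating driver, supplies details the published proof leaves implicit.
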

\begin{proof}
  The only nontrivial case is that of the knife edge, where the group is $\RR$.
The average matrix $\matAbar$ computed in \autoref{prop:reversible} can be computed explicitly in that case:
\begin{align}
  \matAbar =
\begin{bmatrix}
	0&\overline{\kker}\trans{\FF}\\
	0&0
\end{bmatrix}
\end{align}
where
\begin{align}
	\overline{\kker} \coloneqq \frac{1}{2\pi}\int_0^1 \kker(\zz)\ud \zz
  ,
\end{align}
which, following \eqref{eq:kinfeedgekernel}, is simply zero.

We define the reversibility structure $R$ from \eqref{eq:Rdef}, where $\rho$ is defined as follows.
For the CVT, $\rho$ is
\begin{equation}
	\rho(\yy,\vv) = (\yy,-\vv)
  .
\end{equation}
For the remaining systems, $\rho$ is
\begin{equation}
	\rho(\vv) = \vv
  .
\end{equation}
In both cases, $\rho$ fulfils \eqref{eq:Areversible}.


\end{proof}









\section{Conclusions and open problems}\label{sec:outlook}

In this section we summarize the main points of our paper and, based on our findings, list a set of open problems.
These problems are aimed for the nonholonomic integrators community; our goal is that work on them shall lead to a better understanding of geometric integration algorithms for nonholonomic systems.

We start with the conclusions.
The field of nonholonomic integrators aims to construct numerical methods specifically designed for nonholonomic systems, aspiring to outperform standard integration schemes. 
The starting point for designing such integrators is most often some discrete emulation of the Lagrange-d'Alembert principle.
Contrary, however, to the holonomic case, where Hamilton's principle leads to conservation of symplecticity, the phase space flow structure induced by the Lagrange-d'Alembert principle is not fully understood; total energy is always conserved but are there any additional structures?
This lack of understanding is reflected in the literature on nonholonomic integrators.
Indeed, several different, incompatible discrete versions of the Lagrange-d'Alembert principles have been suggested and it is unclear if one or the other yield better performance. 
(Compare, for example, the DLA-principle in \cite{CoMa2001, McPe2006} with the GNI approach in \cite{FeIgMa2008}.)
All the suggested discrete frameworks do, however, have a common feature: if the nonholonomic system is reversible (with respect to the standard reversibility map), then the discrete phase flow map preserves reversibility.
Since all standard nonholonomic test problems are reversible (and typically also integrable), one can expect that the observed numerical behaviour is explained by theory for reversible dynamical systems (for example reversible KAM theory).
In this paper we have concluded this to be the case for a large class of nonholonomic test problems (fibrations over reversible integrable systems).
To verify that reversibility alone is responsible for the good long-time behaviour we have developed a new class of perturbed nonholonomic test problems that are still integrable, but no longer reversible with respect to the standard reversibility map.
Our simulations show that none of the tested nonholonomic integrators perform well on all of these problems.
Of course, our specific selection of test problems does not cover all nonholonomic problems studied in the literature.
They do, however, in many ways represent the simplest non-trivial nonholonomic systems and are therefore worthy for primary investigations of nonholonomic integrators (the notion being that before moving on to more complicated problems, an integrator should perform well on the simplest non-trivial problems).

In addition to nonholonomic integrators based on discrete emulation of the Lagrange-d'Alembert principle, another approach is to start directly from conservation of energy and momentum and construct algorithms that exactly preserve these conservation laws (\emph{energy-momentum methods}, see e.g.\ \cite{Be2006}).
Our numerical simulations with a commonly used method from this class shows that, although both energy and reversibility are preserved, the fibration structure of the test problems is not preserved, even for the reversible problems, which leads to a drift in the ``hidden'' integrals.
This numerical observation is in agreement with our theory based on reversible KAM theory, which cannot be applied unless the fibration structure is also preserved.
We thus conclude that forcing conservation of first integrals is not enough to obtain good long-time behaviour.

\subsection*{Open problems} 
We have formulated a set of problems that reflect, in our opinion, the core challenges for the nonholonomic integrator community.
The problems are listed from most to least general.

\begin{enumerate}

	\item \textbf{Backward error analysis for relevant classes of nonholonomic systems.}
	In lack of a complete characterization of nonholonomic systems, one can still develop a numerical \emph{backward error analysis} (cf.~\cite{HaLuWa2006}) for subclasses of nonholonomic dynamics where the (reduced) phase space geometry is understood. 
	We suggest to start with the class of integrable nonholonomically coupled systems described in this paper.
	Thus, the problem consists in developing nonholonomic integrators such that, when they are applied to integrable nonholonomically coupled systems, their modified vector fields on the reduced phase space preserve the structure of being fibrated over integrable systems.

	\item \textbf{Explain leap-frog performance in \autoref{fig:energy_cvt}.} 
	The lower right diagram of \autoref{fig:energy_cvt} indicates that the leap-frog method nearly conserves the driver energy of a rotating, non-reversible driver.
	As pointed out, this is the only near conservation behaviour we have seen that is not (directly) explained by reversible KAM theory.
	The open problem is to come up with a rigorous understanding of why near conservation is observed in this case.
	A hypothesis is that reversible KAM still can be used, but that one needs to slightly modify the reversibility map which happen to be conserved by leap-frog (but not the other integrators).
	A rigorous understanding of the unexplained behaviour is likely to shed light on the structure of nonholonomic systems.
	(We remark again that leap-frog yields drift in integrals for other non-reversible integrable systems, for example the perturbed knife edge as seen in \autoref{fig:energy_knife_edge}.)

	\item \textbf{Energy-momentum methods that preserve fibrations} 
	As seen in 	\autoref{fig:latitude_cvt}, the energy preserving method (DD), despite being reversible, performs poorly on the reversible CVT problem (it exhibits drift in the ``hidden'' first integral).
	The reason for the poor performance has to do with this integrator not perserving the fibration structure.
	The last open problem consists in finding an energy-momentum nonholonomic integrator that also preserves the fibration structure.
	To base the integrator on the average vector field approach, instead of discrete derivatives, might lead to a solution.
\end{enumerate}

\appendix

\section{Integration schemes for nonholonomic systems}
\label{sec:integrators}

\settocdepth{section}

In this section we give a brief description of the numerical integrators specifically developed for nonholonomic systems of the form~\eqref{eq:eq_of_motion}.
With \emph{integrator} we mean a map 
\begin{equation}
	\Phi_h\colon (\qq_{k},\dot\qq_{k})\mapsto (\qq_{k+1},\dot\qq_{k+1}),
\end{equation}
depending on the time-step parameter $h>0$, such that the sequence 
\begin{equation}
	\qq_1,\,\qq_2,\,\ldots	
\end{equation}
with $\qq_0 = \qq(0)$, approximates the exact solution sequence 
\begin{equation}
	\qq(h),\,\qq(2h),\,\ldots	
\end{equation}
for the nonholonomic system under consideration.

An integrator for some nonholonomic system is called \emph{reversible} if 
\begin{equation}\label{eq:reversible_integrator}
	\Phi_h^{-1}(\qq,\dot\qq) = \Phi_h(\qq,-\dot\qq).	
\end{equation}
Let us give a word of caution here; the notion of reversibility of an integrator depends on the particular non-holonomic system being approximated.
Indeed, a method may be reversible for one non-holonomic system, but fail to be so for another system.

\subsection{Discrete Lagrange--d'Alembert (DLA) integrators}
\label{sub:dla}

The \emph{discrete Lagrange--d'Alembert (DLA) principle} for deriving nonholonomic integrators was introduced by~\citet{CoMa2001}.
The notion is based on a nonholonomic version of \emph{discrete Lagrangian mechanics}, developed by~\citet{Ve1991}.
By specifying a discrete Lagrangian function and a discrete constraint manifold, the DLA principle yields an integration algorithm, 
typically by an implicit equation (see~\cite[\S\!~3]{CoMa2001} for details).

\citet{CoMa2001} suggested the \emph{\DLA{\alpha} integrator}:
\begin{equation}\label{eq:dla_alpha}
	\begin{split}
		\qq_{1-\alpha} &= \qq_{0} + (1-\alpha) h \dq_{0} \\
		\dq_{1} &= \dq_{0} - \alpha h\nabla V(\qq_{0}) - (1-\alpha) h\nabla V(\qq_{1}) + hA(\qq_{1-\alpha})^T\blambda \\
		\qq_{1} &= \qq_{1-\alpha} + \alpha h \dq_{1} \\
		0 &= A(\qq_{1})\dq_{1}.
	\end{split}
\end{equation}
This method is first-order accurate for $\alpha \neq 1/2$ and second-order accurate for $\alpha = 1/2$.
Explicitly, \DLA{\frac{1}{2}} is given by
\begin{equation}\label{eq:dla_half}
	\begin{split}
		\qq_{1/2} &= \qq_{0} + \frac{h}{2}\dq_{0} \\
		\dq_{1} &= \dq_{0} - \frac{h}{2}\nabla V(\qq_{0}) - \frac{h}{2}\nabla V(\qq_{1}) + hA(\qq_{1/2})^T\blambda \\
		\qq_{1} &= \qq_{1/2} + \frac{h}{2}\dq_{1} \\
		0 &= A(\qq_{1})\dq_{1}.
	\end{split}
\end{equation}

Another DLA method, suggested by \citet{McPe2006}, is obtained by taking half a step with \DLA{0} followed by half a step with \DLA{1}.
This \emph{\DLA{0,1} integrator} is given by
\begin{equation}\label{eq:dla_01}
	\begin{split}
		\qq_{1/2} &= \qq_{0} + \frac{h}{2}\dq_{0} \\
		\dq_{1} &= \dq_{0} - h\nabla V(\qq_{1/2}) + hA(\qq_{1/2})^T\blambda \\
		\qq_{1} &= \qq_{1/2} + \frac{h}{2}\dq_{1} \\
		0 &= A(\qq_{1})\dq_{1}.
	\end{split}
\end{equation}
This method is second-order accurate and, contrary to \DLA{\frac{1}{2}}, only linearly implicit.

\subsection{Nonholonomic leap-frog method}
\label{sub:leapfrog}

The nonholonomic leap-frog method is given by
\begin{align}
  \qq_{1} &= \qq_0 + h \dq_{1}\\
  \dq_{1} &= \dq_0 + h (\nabla V(\qq_0) + \consmat(\qq_0)^{T} \blambda)\\
  0 &= \consmat(\qq_0)(\dq_0+\dq_{1})
\end{align}
This integrator was suggested already in~\cite[\S\,5.1]{McPe2006}, but discarded as it is not based on the DLA principle.
A very important property of this method is that it is only \emph{linearly-implicit}.

As observed in~\cite{FeIgMa2008}, this method fits within the so-called geometric nonholonomic integrator (GNI) family~\cite{FeIgMa2008,FeIgMa2009,KoMaDiFe2010,FeJiMa2015}, when it is rewritten in the following way:
\begin{equation}\label{eq:leap_frog}
	\begin{split}
		\dq_{1/2} &= \dq_{0} - \frac{h}{2}\left( \nabla V(\qq_0) + A(\qq_{1/2})^T\blambda_{0} \right) \\
		\qq_{1} &= \qq_0 + h \dq_{1/2} \\
		\dq_{1} &= \dq_{1/2} - \frac{h}{2}\left( \nabla V(\qq_{1}) + A(\qq_{1})^T\blambda_{1} \right) \\
		0 &= A(\qq_{1})\dq_{1}.
	\end{split}
\end{equation}


\subsection{Discrete derivative nonholonomic integrators}
\label{sec:epinteg}

\newcommand*\dnabla{\overline{\nabla}}

These integrators are energy preserving, and are based on the use of discrete gradients~\cite{Be2006}.

Let us define the auxiliary variable
\begin{equation}
 \overline{\qq} \coloneqq \frac{\qq_0 + \qq_1}{2} 
 .
\end{equation}
We assume that a function $\dnabla V(\qq_0, \qq_1)$
is a \emph{discrete gradient} of the potential, 
that is, it fulfills the requirement $\dnabla V(\qq_0,\qq_1) (\qq_1 - \qq_0) = V(\qq_1) - V(\qq_0)$.
The method is then described by:
\begin{align}
  \qq_1 &= \qq_0 + h \frac{\dot\qq_0 + \dot\qq_1}{2} \\
  \dot{\qq}_1 &= \dot{\qq}_0 + h \paren[\Big]{-\dnabla V \paren*{\qq_0, \qq_1}  + \consmat(\overline{\qq})^T \blambda} \\
  0 &= \consmat\paren*{\overline{\qq}}\frac{\dot\qq_0 + \dot\qq_1}{2}
\end{align}
It is straightforward to check that the energy is exactly preserved by this method.






\bibliographystyle{amsplainnat} 
\bibliography{elbar2}

\end{document}